\DeclareMathOperator{\Hom}{Hom}
\DeclareMathOperator{\Gal}{Gal}
\DeclareMathOperator{\Imm}{Im}
\DeclareMathOperator{\Char}{char}
\newtheorem{thm}{Theorem}
\newtheorem{prop}{Proposition}[section]
\newtheorem{lem}[prop]{Lemma}
\theoremstyle{definition}
\newtheorem{defn}[prop]{Definition}
\newtheorem{remark}[prop]{Remark}
\numberwithin{equation}{section}
\newcommand{\Z}{\mathbb{Z}}
\newcommand{\Ic}{\mathcal{I}}
\newcommand{\Gc}{\mathcal{G}}
\newcommand{\Gf}{\mathfrak{G}}
\newcommand{\Hf}{\mathfrak{H}}
\newcommand{\Gcm}[1]{\Gc^{ [#1] }}
\begin{document}

\begin{abstract}
In this paper we use the Merkurjev-Suslin theorem to determine the structure of arithmetically significant Galois modules that arise from Kummer theory.
Let $K$ be a field of characteristic different from a prime $\ell$, $n$ a positive integer, and suppose that $K$ contains the $(\ell^n)^{\text{th}}$ roots of unity.  
Let $L$ be the maximal $\ell^n$-elementary abelian extension of $K$, and set $G=\Gal(L| K)$.  
We consider the $G$-module $J:=L^\times/\ell^n$ and denote its socle series by $J_m$.
We provide a precise condition, in terms of a map to $H^3(G,\Z/\ell^n)$, determining which submodules of $J_{m-1}$ embed in cyclic modules generated by elements of $J_m$; therefore this map provides an explicit description of $J_m$ and $J_m/J_{m-1}$.
The description of $J_m/J_{m-1}$ is a new non-trivial variant of the classical Hilbert's Theorem 90.
The main theorem generalizes a theorem of Adem, Gao, Karaguezian, and Min\'a\v{c} which deals with the case $m=\ell^n=2$, and also ties in with current trends in minimalistic birational anabelian geometry over essentially arbitrary fields.

\end{abstract}

\title[Galois Module Structure]{Galois Module Structure of $(\ell^n)^{\text{th}}$ Classes of Fields}
\subjclass[2010]{Primary: 12G, 20C. Secondary: 12G05, 20C05}

\author[1]{J\'an Min\'a\v{c}$^{1}$}
\thanks{$^{1}$Research supported in part by NSERC grant R0370A01.}
\address{J\'{a}n Min\'{a}\v{c}, \vskip0pt
Department of Mathematics, \vskip0pt
Western University, \vskip0pt
1151 Richmond Street, \vskip0pt
London, Ontario \vskip0pt
Canada N6A 3K7}
\email{minac@uwo.ca}
\urladdr{http://www.math.uwo.ca/~minac/minac.html}

\author[2]{John Swallow$^{2}$}
\thanks{$^{2}$Research supported in part by NSF grant DMS-0600122.}
\address{John Swallow, \vskip0pt
Department of Mathematics and Computer Science, \vskip0pt
The University of the South, \vskip0pt
735 University Avenue, \vskip0pt
Sewanee, TN.  37383 \vskip0pt
USA}
\email{john.swallow@sewanee.edu}

\author[3]{Adam Topaz$^{3}$}
\thanks{$^{3}$Research supported in part by a Benjamin Franklin fellowship from the University of Pennsylvania and in part by NSF postdoctoral fellowship DMS-1304114.}
\address{Adam Topaz, \vskip0pt 
Department of Mathematics, \vskip0pt
University of California, Berkeley, \vskip0pt
970 Evans Hall \#3840, \vskip0pt
Berkeley, CA. \ 94720-3840 \ \vskip0pt
USA}
\email{atopaz@berkeley.edu}

\thanks{The authors would like to thank Ido Efrat for his kind encouragement and helpful comments related to the introduction.
The authors would also like to thank the referee for his careful reading of this paper and his useful suggestions regarding its exposition.}

\date{\today}
\maketitle

\section{Introduction}

\[\text{\bf``What information is encoded in Galois groups?''} \]
This is the fundamental question of Grothendieck's anabelian geometry; the several achievements in this subject show that in many special but important situations, the answer is ``everything'' when one deals with the all of the Galois theoretical information.
In the birational setting, for instance, the Neukirch-Uchida-Pop theorem  \cite{Neukirch1969}, \cite{Neukirch1969a}, \cite{Uchida1976}, \cite{Pop1994}, \cite{Pop2000} completely characterizes infinite finitely generated fields using their absolute Galois groups.

Current trends in the literature suggest that, in many cases, much of the arithmetic and geometry of the situation can already be detected using very minimal Galois theoretical information.
Most prominent is Bogomolov's program \cite{Bogomolov1991} in almost-abelian anabelian geometry -- for function fields over an algebraically closed field using the maximal pro-$\ell$ Galois group of a field -- whose aim is to reconstruct such function fields from their pro-$\ell$ abelian-by-central Galois groups.
While this program is far from being complete in its full generality, it has been carried through for function fields over the algebraic closure of a finite field \cite{Bogomolov2008a}, \cite{Bogomolov2011}, \cite{Pop2011}.

Similar results, however, are easily seen to be false for arbitrary fields -- indeed, there exist many non-isomorphic fields which have the same absolute Galois group.
It is still the case, however, that even very small pro-$\ell$ Galois groups still carry a significant amount of arithmetic/geometric information about the field, as described in the overview below.
In this paper, we find how much pro-$\ell$ Galois theoretical data is encoded in these minimal ``almost-abelian'' pro-$\ell$ Galois groups.
More precisely, we use the Merkurjev-Suslin Theorem to determine the structure of arithmetically significant Galois modules which form the building-blocks of meta-abelian pro-$\ell$ Galois groups. 
The main theorem of this paper is rather surprising as it shows that very small quotients of an absolute Galois group completely control the structure of Galois modules arising from {\bf strictly larger} Galois groups.
Moreover, it is rather surprising that this holds true for arbitrary fields which contain enough roots of unity.

\subsection{Overview}

Let $K$ be a field of characteristic different from a fixed prime $\ell$.
Assume that $K$ contains $\mu_{\ell^n}$, the $(\ell^n)^{\text{th}}$ roots of unity ($n \geq 1$ or $n = \infty$).
We denote by $K(\ell)$ the maximal pro-$\ell$ Galois extension of $K$ (inside a chosen separable closure) so that $\Gc_K := \Gal(K(\ell)|K)$ is the maximal pro-$\ell$ quotient of $G_K$, the absolute Galois group of $K$.
Let us recall the mod-$\ell^n$ central descending series of a pro-$\ell$ group $\Gc$:
\[ \Gc^{(1,n)} = \Gc, \ \ \Gc^{(m+1,n)} = [\Gc,\Gc^{(m,n)}] \cdot (\Gc^{(m,n)})^{\ell^n}. \]
To simplify the notation, we denote by $\Gcm{m,n} = \Gc/\Gc^{(m,n)}$; when no confusion is possible, we will omit the explicit $n$: $\Gc^{(m)} = \Gc^{(m,n)}$, $\Gcm{m}=\Gcm{m,n}$.
In the context of Galois theory, $\Gcm{3,n}_K$ is called the {\bf $\ell^n$-abelian-by-central} Galois group of $K$.

It has become increasingly evident that much of the arithmetic information of the field $K$ which is encoded in the pro-$\ell$ Galois group $\Gc_K$ can be recovered using the much smaller quotient $\Gcm{3}_K$.
For instance, in the case where $\ell^n = 2$, Min\'{a}\v{c}-Spira showed that the group $\Gcm{3}_K$ captures information about orderings of $K$, and that this group can be seen as a Galois-theoretical analogue of the Witt ring of quadratic forms of $K$ -- see \cite{Minac1990}, \cite{Minac1996}.

Furthermore, the Bloch-Kato conjecture -- now a theorem of Voevodsky-Rost et al. -- can be used to deduce that the cohomology ring of $G_K$ (and/or $\Gc_K$) $H^*(K,\Z/\ell^n(*))\cong H^*(K,\Z/\ell^n)$, along with the Bockstein morphism, can be recovered from the decomposable part of $H^*(\Gcm{3}_K,\Z/\ell^n)$; conversely, the group $\Gcm{3}_K$ can be recovered from $H^*(K,\Z/\ell^n)$ together with the Bockstein morphism -- see \cite{Chebolu2009} for details. 
Thus the group $\Gcm{3}_K$, for $\ell^n$ an arbitrary prime power, can be seen as a Galois-theoretical analogue of the Milnor K-ring $K_*^M(K)/\ell^n$.
Going further, in the recent paper \cite{Efrat2011c}, $\Gcm{3}_K$ was identified as the smallest quotient of $G_K$ for which the above holds. 
In any case, as an immediate consequence one can deduce the following:
Suppose that $S$ is a free pro-$\ell$ group;
then $\Gcm{3,1}_K \cong S^{[3,1]}$ if and only if $\Gc_K \cong S$.

On the other hand, $\Gc_K$ and its quotient $\Gcm{3,n}_K$ already encode similar valuation theoretic data of the field, for certain $n$.
The work of Jacob-Ware \cite{Jacob1989}, Engler-Nogueira \cite{Engler1994}, Efrat \cite{Efrat1995} and Engler-Koenigsmann \cite{Engler1998} in the 90's show that one can completely characterize the decomposition and inertia subgroups of $\Gc_K$ corresponding to tamely-branching valuations of $K$ using only the group-theoretical structure of $\Gc_K$.
Later on, it was realized by Efrat-Min\'a\v{c} \cite{Efrat2011a} that the much smaller quotient $\Gcm{3,1}_K$ suffices, in certain special cases, to detect tamely-branching {\bf $\ell$-Henselian} valuations of $K$ (see also \cite{Mah'e2004} for the $\ell=2$ case).
These results build upon the theory of rigid elements which was originally developed by Ware \cite{Ware1981}, then further developed by Arason-Elman-Jacob \cite{Arason1987}, Koenigsmann \cite{Koenigsmann1995}, Efrat \cite{Efrat1995}, \cite{Efrat1999}, and also others.
Similarly, Bogomolov and Tschinkel's theory of commuting-liftable pairs shows how to detect many more valuations using $\Gcm{3,\infty}_K$ but only for function fields $K|k$ over algebraically closed fields $k=\bar k$ -- see \cite{Bogomolov1991} and \cite{Bogomolov2007}.
Unifying the two methods mentioned above, the recent work of the third author \cite{Topaz2012} shows that the small quotients $\Gcm{3,n}_K$, for $n=1$ or $n=\infty$, already encode information about the decomposition/inertia subgroups of $\Gcm{2,n}_K$ corresponding to all valuations of $K$ where $K$ is an arbitrary field with $\mu_{\ell^n} \subset K$.
See e.g. \cite{Efrat2006}, \cite{Efrat2006b} and/or \cite{Topaz2012} for a comprehensive history and development of this subject within the work of the many authors mentioned above.

The theory of commuting-liftable pairs was used as the ``local-theory'' in the completion of Bogomolov's program in birational anabelian geometry over the algebraic closure of a finite field by Bogomolov-Tschinkel \cite{Bogomolov2008a}, \cite{Bogomolov2011} and separately by Pop \cite{Pop2010},\cite{Pop2011}.
In particular this shows that, if $K$ and $L$ are function fields over the algebraic closure of a finite field, then $G_K \cong G_L$ if and only if $\Gcm{3,\infty}_K \cong \Gcm{3,\infty}_L$, in a functorial way.

The examples above suggest the following question: to what extent is $\Gc_K$ determined by $\Gcm{3,n}_K$?
In this paper, we investigate a surprising relationship between the structure of small ``almost-abelian'' quotients of absolute Galois groups -- including and building upon $\Gcm{3}_K$ -- and the structure of Galois modules defined by {\bf strictly larger} Galois groups.
More precisely, we explore how these groups control certain arithmetically significant Galois modules $J$ which arise from Kummer theory; this is done by producing cohomological obstructions for determining cyclic submodules of $J$ (see below for details) which can be seen as higher versions of Hilbert's Theorem 90.
This provides a new and rather efficient way of describing important modules in Galois theory by combining powerful methods from Galois cohomology and techniques from modular representation theory.
The main result of this paper thus answers the question of determining the structure of the fundamental Galois module $J$. 
The work of this paper will have further applications towards the construction of $\ell^n$-meta-abelian Galois groups from much smaller $\ell^n$-abelian-by-central Galois groups of {\bf arbitrary fields} which contain sufficiently many roots of unity.

\subsection{Notation}

All homomorphisms in the context of discrete and profinite groups will be continuous.
From now on, we will fix once and for all a prime $\ell$ and an integer $n \geq 1$.
For any abelian group $M$ we will denote by $M^\vee = \Hom(M,\Z/\ell^n)$.
Let $\Gc$ be a pro-$\ell$ group.
We denote by $H^*(\Gc) = H^*(\Gc,\Z/\ell^n)$, the continuous cochain cohomology of $\Gc$ with values in $\Z/\ell^n$.
Throughout we will denote by $H = \Gc^{(2)}$ and $G = \Gcm{2} = \Gc/H$ and assume that $G$ is $\Z/\ell^n$-torsion-free -- i.e. $G \cong \prod_i \Z/\ell^n$.
We consider the cohomology group $J := H^1(H)$ as a $G$-module.
Denoting $\Lambda := \Z/\ell^n[[G]]$ the completed group ring, we observe that $J$ is a continuous $\Lambda$-module where $J$ is given the discrete topology.

\begin{remark}
Let $K$ be a field as above ($\Char K \neq \ell$ and $\mu_{\ell^n} \subset K$).
Kummer theory implies that $\Gcm{2}_K$ is $\Z/\ell^n$-torsion-free -- i.e. $\Gcm{2}_K \cong \prod_i \Z/\ell^n$.

In this situation we pick, once and for all, an isomorphism of $G_K$-modules $\mu_{\ell^n} \cong \Z/\ell^n$ and use it tacitly throughout.
The module $J$ then has a significant arithmetical counterpart.
Indeed, recall that $H = \Gc_K^{(2)}$. 
Denoting by $L = K(\ell)^H$, we deduce that $L|K$ is Galois, $G = \Gal(L|K)$ and Kummer theory implies that $K(\sqrt[\ell^n]{K}) = L$.
Again by Kummer theory we deduce that $J = L^\times/\ell^n$, considered as a $G = \Gal(L|K)$-module where the action of $G$ on $J$ is compatible with the canonical projection $L^\times \twoheadrightarrow L^\times/\ell^n$.
\end{remark}

Going back to the general situation, denote by $\Ic$ the augmentation ideal of $\Lambda$.
The $\Lambda$-module $J$ has a canonical filtration (commonly known as the socle series) induced by $\Ic$ defined as follows:
\[J_m := \{ \gamma \in J \ : \ \eta \gamma = 0 \text{ for all } \eta \in \Ic^m\}. \]
Hence we immediately see that $J_m$ is a $\Lambda/\Ic^m$-module.
Thus, in particular $J_1 = J^G$ is the submodule of invariants; the higher $J_m$ can be seen as generalizations of the submodule of invariants in the sense that $J_{m+1}/J_m = (J/J_m)^G$; in particular, we deduce that $\Ic \cdot J_{m+1} \subset J_m$.
Moreover, this is an exhaustive filtration in the sense that $\bigcup_m J_m = J$.

Suppose $\gamma$ is some element of $J_{m}$.
Then there is a canonical $\Lambda$-homomorphism $\phi_\gamma \colon \Ic \rightarrow J_{m-1}$ defined by $\eta \mapsto \eta\gamma$.
This is, of course, the restriction to $\Ic$ of the unique $\Lambda$-linear map $\Lambda \rightarrow J_{m}$ defined by $1 \mapsto \gamma$.
In particular, we obtain a homomorphism of $\Z/\ell^n$-modules $J_{m} \rightarrow \Hom_G(\Ic,J_{m-1})$ defined by $\gamma \mapsto \phi_\gamma$.

A natural question arises: Which homomorphisms $\phi \colon \Ic \rightarrow J_{m-1}$ are actually defined by a $\gamma \in J_{m}$ as above; namely, what is the image of the canonical map $J_{m} \rightarrow \Hom(\Ic,J_{m-1})$.
Equivalently, one can ask: which submodules $M \leq J_{m-1}$ satisfy $\Ic \cdot \widetilde M = M$ for some cyclic submodule $\Lambda \cdot \gamma = \widetilde M \leq J_{m}$?
In some sense, this question provides a way to relate the $G$-module structure of $J_m$ to that of $J_{m-1}$.

In the special case $\ell^n=2$, $m=2$ and $\Gc = \Gc_K$ for a field $K$ of characteristic different from $2$, Adem, Gao, Karaguezian, and Min\'a\v{c} (\cite{AGKM} Theorem 4.1) determined a cohomological obstruction, with values in $H^3(G,\Z/\ell^n)$, which determines precisely which $\phi \colon \Ic \rightarrow J_1 = J^G$ arise from an element $\gamma \in J_{2}$.
In this paper, we provide a generalization of this obstruction as described below.


Going back again to our general situation, given a homomorphism $\phi \colon \Ic \rightarrow J_m \leq J = H^1(H)$, we obtain an induced pairing:
\[ \Ic \times H \rightarrow \Z/\ell^n. \]
Denote by $H_\phi$ the right kernel of this pairing, then $H_\phi$ is a normal subgroup of $\Gc$ as $\Imm(\phi)$ is a $G$-submodule of $J$; we denote by $\Gc_\phi$ the quotient $\Gc/H_\phi$.

\begin{remark}
In the context of Galois theory -- i.e. $\Gc = \Gc_K$ and $L = K(\ell)^H$ as above -- take $\phi \colon \Ic \rightarrow L^\times/\ell^n = J$ a homomorphism (with image, for example, in $J_m$).  
We deduce using Kummer theory that $L(\sqrt[\ell^n]{\Imm\phi})|K$ is Galois and:
\[ \Gc_\phi = \Gal(L(\sqrt[\ell^n]{\Imm\phi})|K) \]
where $\Gc_\phi$ is the quotient of $\Gc$ as defined above.
\end{remark}

In this paper we give a naturally defined map, $\Psi \colon \Hom_G(\Ic, J_m) \rightarrow H^3(G,\Z/\ell^n)$ (Definition \ref{defn:psi}) which yields our desired obstruction for pro-$\ell$ groups $\Gc$ which satisfy an assumption reminiscent of the Merkurjev-Suslin theorem.
Moreover, we show that there is a group-theoretical recipe to compute $\Psi(\phi)$ using the group-theoretical structure of $\Gc_\phi$.

\begin{thm}
\label{thm:main-intro}
In the notation above, consider the following statements:
\begin{enumerate}
 \item $\phi = \phi_\gamma$ for some $\gamma \in J_{m}$ (see the definition of $\phi_\gamma$ above).
 \item $\Psi(\phi) = 0 \in H^3(G,\Z/\ell^n)$ (see Definition \ref{defn:psi} for the definition of $\Psi$).
\end{enumerate}
Then (1) implies (2).
Assume furthermore that the inflation map $H^2(G) \rightarrow H^2(\Gc)$ is surjective, then (1) and (2) are equivalent.
Moreover, there is a group-theoretical recipe to compute $\Psi(\phi)$ using the group-theoretical structure of $\Gc_\phi$.
\end{thm}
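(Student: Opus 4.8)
\smallskip
\noindent\emph{Strategy of the proof.} The plan is to recast the condition ``$\phi = \phi_\gamma$ for some $\gamma \in J_m$'' as a module-extension problem, and to read off the obstruction through the Lyndon--Hochschild--Serre spectral sequence $H^p(G,H^q(H)) \Rightarrow H^{p+q}(\Gc)$ of $1 \to H \to \Gc \to G \to 1$.

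First I would normalize $\phi$. Since $\phi_\gamma(\Ic) \subseteq J_{m-1}$ for every $\gamma \in J_m$, statement (1) forces $\Imm \phi \subseteq J_{m-1}$, and then $\Lambda$-linearity gives $\phi(\Ic^m) = \Ic^{m-1}\cdot\phi(\Ic) \subseteq \Ic^{m-1}\cdot J_{m-1} = 0$, so $\phi$ descends to a $\Lambda$-homomorphism $\bar\phi \colon \Ic/\Ic^m \to J = H^1(H)$. The crucial point is then the equivalence
\[
\phi = \phi_\gamma \text{ for some } \gamma \in J_m \iff \bar\phi \text{ extends to a } \Lambda\text{-homomorphism } \Lambda/\Ic^m \to J ,
\]
obtained from the short exact sequence $0 \to \Ic/\Ic^m \to \Lambda/\Ic^m \to \Z/\ell^n \to 0$: an extension $\widetilde{\bar\phi}$ produces $\gamma := \widetilde{\bar\phi}(1+\Ic^m)$, which lies in $J_m$ \emph{automatically} because $\Ic^m$ annihilates $1+\Ic^m$ in $\Lambda/\Ic^m$, and which satisfies $\eta\gamma = \widetilde{\bar\phi}(\eta) = \phi(\eta)$ for $\eta \in \Ic$; conversely such a $\gamma$ gives back the extension $\bar\eta \mapsto \eta\gamma$, which is well defined since $\Ic^m\gamma = 0$.

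Next I would build $\Psi$ and prove the implications. The obstruction to extending $\bar\phi$ is the image $\delta(\phi)$ of $\bar\phi$ under the connecting map $\operatorname{Hom}_\Lambda(\Ic/\Ic^m,J) \to \operatorname{Ext}^1_\Lambda(\Z/\ell^n,J) \cong H^1(G,H^1(H))$, so $\bar\phi$ extends iff $\delta(\phi)=0$; I would take $\Psi(\phi) := d_2(\delta(\phi))$ (matching Definition \ref{defn:psi} at the level of cohomology classes), where $d_2 \colon E_2^{1,1} = H^1(G,H^1(H)) \to E_2^{3,0} = H^3(G,\Z/\ell^n)$ is the spectral-sequence differential. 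Then $(1)\Rightarrow(2)$ is immediate and unconditional: $\phi = \phi_\gamma$ gives $\delta(\phi)=0$, hence $\Psi(\phi)=0$. For the converse, assume $\Psi(\phi)=0$, i.e.\ $\delta(\phi)\in\ker d_2$; since no other nonzero differential touches the $(1,1)$ position, $\ker d_2 = E_3^{1,1} = E_\infty^{1,1} = F^1H^2(\Gc)/F^2H^2(\Gc)$. Now $F^2H^2(\Gc)$ is exactly the image of the inflation map $H^2(G)\to H^2(\Gc)$, so under the Merkurjev--Suslin-type hypothesis that this map is surjective one has $F^2H^2(\Gc) = F^1H^2(\Gc) = H^2(\Gc)$, whence $E_\infty^{1,1}=0$ and $\delta(\phi)=0$; by the equivalence above the required $\gamma\in J_m$ exists.

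Finally, for the group-theoretical recipe I would observe that $\Imm\phi$ spans the $G$-submodule of $J$ dual to $N := H/H_\phi$, so $\bar\phi$ factors as $\Ic/\Ic^m \twoheadrightarrow N^\vee \hookrightarrow J$ and $\delta(\phi)$ is the image, under $H^1(G,H^1(N)) \to H^1(G,H^1(H))$, of the analogous obstruction $\delta_0$ attached to the extension $1 \to N \to \Gc_\phi \to G \to 1$; by functoriality of the spectral sequence along $\Gc \twoheadrightarrow \Gc_\phi$ (which is the identity on the $E_2^{3,0}$-spot) this yields $\Psi(\phi) = d_2^{\Gc_\phi}(\delta_0)$, a class manufactured from $G$, $\phi$ and the group $\Gc_\phi$ alone --- equivalently, $\Psi(\phi)$ is the pushforward of the extension class $[\Gc_\phi]\in H^2(G,N)$ along $N \hookrightarrow (\Ic/\Ic^m)^\vee$ followed by the connecting map into $H^3(G,\Z/\ell^n)$ coming from $0 \to \Z/\ell^n \to (\Lambda/\Ic^m)^\vee \to (\Ic/\Ic^m)^\vee \to 0$. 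I expect the main obstacle to lie in two load-bearing points: arranging the first step so that the socle constraint $\Ic^m\gamma=0$ is \emph{built into} the target $\Lambda/\Ic^m$ rather than imposed afterwards, and the spectral-sequence bookkeeping identifying $E_\infty^{1,1}$ with precisely the subquotient of $H^2(\Gc)$ that dies under the hypothesis --- together with the cochain-level check that the $\Psi$ of Definition \ref{defn:psi} really coincides with the transgression $d_2 \circ \delta$.
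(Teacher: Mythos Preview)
Your approach is genuinely different from the paper's, and the core spectral-sequence idea is sound, but there is one load-bearing identification that you flag without proving, and without it you are not proving the theorem as stated.

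\smallskip
\textbf{Where the two arguments diverge.} The paper works with the $E_2^{0,1}\to E_2^{2,0}$ transgression for the coefficient modules $\Ic_m^\vee$, $\Lambda_m^\vee$, $\Z/\ell^n$, and its $\Psi$ is literally $\delta\circ d_2$ (Definition~\ref{defn:psi}). The implication $(2)\Rightarrow(1)$ is obtained from the diagram of Lemma~\ref{lem:comm-diagram} plus the inductive Proposition~\ref{prop:ker-d2}: one shows by induction on $m$ that $\ker\!\big(d_2\colon H^1(H,\Ic_m^\vee)^G\to H^2(G,\Ic_m^\vee)\big)$ already lies in the image of $H^1(H,\Lambda_m^\vee)^G$. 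You instead pass to the single spectral sequence with $\Z/\ell^n$-coefficients, place the Ext-obstruction $\delta'(\phi)$ in $E_2^{1,1}=H^1(G,J)$, and kill it in one stroke via $E_\infty^{1,1}=F^1H^2(\Gc)/F^2H^2(\Gc)=0$ under the surjectivity hypothesis. That part of your argument is correct: no differential but $d_2$ touches $E_\bullet^{1,1}$, and $F^2H^2(\Gc)=\operatorname{im}(\operatorname{inf})$. If it goes through, it is cleaner than the paper's induction and uses only the $\Z/\ell^n$-coefficient spectral sequence.

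\smallskip
\textbf{The gap.} Everything hinges on the equality
\[
d_2^{(1,1)}\circ\delta' \;=\; \delta\circ d_2^{(0,1)}
\]
between your map and the $\Psi$ of Definition~\ref{defn:psi}. You acknowledge this as a ``cochain-level check,'' but it is not a formality: the two connecting maps come from different exact sequences (you use $0\to\Ic_m\to\Lambda_m\to\Z/\ell^n\to 0$ in the first $\operatorname{Ext}_\Lambda$-variable; the paper uses the dual $0\to\Z/\ell^n\to\Lambda_m^\vee\to\Ic_m^\vee\to 0$ in $G$-cohomology), the two $d_2$'s sit at different spots of different spectral sequences, and the intermediate groups $H^1(G,J)$ and $H^2(G,\Ic_m^\vee)$ are not naturally comparable. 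One route is to use the description (valid for all $p$) of $d_2\colon E_2^{p,1}\to E_2^{p+2,0}$ as cup product with the extension class $\alpha\in H^2(G,H^{\mathrm{ab}})$, and then prove the compatibility
\[
\delta\big(\phi_*(\alpha)\big)\;=\;\delta'(\phi)\cup\alpha
\]
in $H^3(G,\Z/\ell^n)$; this reduces to showing that the connecting map for the coefficient sequence commutes with cup product by $\alpha$, which is standard but must be written out (and signs tracked). Until this is done, your proposal establishes the theorem for \emph{your} $\Psi$, not for the $\Psi$ of Definition~\ref{defn:psi}. The paper's approach, by contrast, never needs such a comparison because it manipulates the defining composite $\delta\circ d_2$ directly; the price it pays is the induction of Proposition~\ref{prop:ker-d2}.

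\smallskip
\textbf{On the group-theoretic recipe.} Your ``equivalently'' clause at the end --- pushing $[\Gc_\phi]\in H^2(G,N)$ into $H^2(G,\Ic_m^\vee)$ and then applying the connecting map to $H^3(G,\Z/\ell^n)$ --- is exactly the paper's recipe in \S\ref{sec:cohom-invar}. Your primary formula $d_2^{\Gc_\phi}(\delta_0)$ is again correct only modulo the same unproved comparison.
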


\begin{remark}
Let $K$ be a field as above and assume that $\Gc_K = \Gc$.
Then by the Merkurjev-Suslin theorem \cite{Merkurjev1982}, one has $H^2(\Gc_K,\Z/\ell^n) \cong H^2(K,\mu_{\ell^n}^{\otimes 2}) = K_2^M(K)/\ell^n$.
On the other hand, Kummer theory implies that $H^1(\Gcm{2}_K,\Z/\ell^n) \cong H^1(K,\mu_{\ell^n}) = K^\times/\ell^n$.
These isomorphisms are compatible with the cup product and so we deduce that the inflation map $H^2(\Gcm{2}_K) \rightarrow H^2(\Gc_K)$ is surjective so that $\Gc_K$ satisfies the added assumption of Theorem \ref{thm:main-intro}.
\end{remark}

\section{Encoding $J_m$ using Cohomology}

In this section and throughout the paper, we use the notation introduced in \S1.2.
In particular, $\Gc$ is a pro-$\ell$ group, $H = \Gc^{(2)}$ and $G = \Gc/H$.
For any $G$-module $M$, we consider $M$ also as a $\Gc$-module with action via the canonical projection $\Gc \rightarrow G$.
In particular, such a module $M$ is acted upon trivially by $H$.
If $N$ is another $G$-module, we endow $\Hom(M,N) = \Hom_\Z(M,N)$ with the usual structure of a $G$-module by defining: 
\[ (\sigma_* f)(m) = \sigma \cdot f(\sigma^{-1} \cdot m) \]
for $\sigma \in G$, $f \in \Hom(M,N)$ and $m \in M$.
Thus, $\Hom(M,N)^G = \Hom_G(M,N)$ is the set of $G$-equivariant homomorphisms from $M$ to $N$.

We now provide a cohomological way of encoding $J_m$ as $\Z/\ell^n$-modules.
We will also show how to encode the canonical map $J_{m} \rightarrow \Hom_G(\Ic,J_{m-1})$ using cohomology.
For simplicity, denote $\Ic_m := \Ic /\Ic^m$ and $\Lambda_m := \Lambda/\Ic^m$.
Recall that
\[ J_m = \{ \gamma \in J \ : \ \eta \cdot \gamma = 0 \ \forall \eta \in \Ic^m \}. \]
The canonical map $\Hom_G(\Lambda_m,J) \rightarrow J_m$ defined by $f \mapsto f(1)$ is an isomorphism (of $\Z/\ell^n$-modules).
Also, one has the obvious equality: $\Hom_G(\Ic_m,J) =  \Hom_G(\Ic,J_{m-1})$.
For $\gamma \in J_m$, we obtain a homomorphism $\phi_\gamma \colon \Ic \rightarrow J_{m-1}$ defined by $\eta \mapsto \eta \cdot \gamma$.
On the other hand, the map $\Ic_m \hookrightarrow \Lambda_m$ induces a canonical restriction map $\Hom_G(\Lambda_m,J) \rightarrow \Hom_G(\Ic_m,J)$; this map is precisely $\gamma \mapsto \phi_\gamma$ as defined above when one identifies $\Hom_G(\Lambda_m,J) \cong J_m$ as above.

Recall that, since $\Lambda_m^\vee$ is a trivial $H$-module, we have $H^1(H,\Lambda_m^\vee) = \Hom(H,\Lambda_m^\vee)$ and thus $H^1(H,\Lambda_m^\vee) = \Hom(\Lambda_m,J)$ by Pontryagin duality.
Therefore, $H^1(H,\Lambda_m^\vee)^G = \Hom_G(\Lambda_m,J)$ as described above; furthermore recall that $J_m \cong \Hom_G(\Lambda_m,J)$.
In a similar way, we deduce that $H^1(H,\Ic_m^\vee)^G = \Hom_G(\Ic_m,J)$.
The map $J_m \rightarrow \Hom_G(\Ic_m,J)$ defined by $\gamma \mapsto \phi_\gamma$ corresponds via these canonical isomorphisms precisely to the map $H^1(H,\Lambda_m^\vee)^G \rightarrow H^1(H,\Ic_m^\vee)^G$ induced by $\Lambda_m^\vee \rightarrow \Ic_m^\vee$, the dual of the canonical embedding $\Ic_m \hookrightarrow \Lambda_m$.
We summarize this discussion for later use in the following lemma:

\begin{lem}
\label{lem:cohom-jm}
One has canonical isomorphisms:
\[ H^1(H,\Lambda_m^\vee)^G \xrightarrow{\cong} \Hom_G(\Lambda_m, J) \cong J_m, \]
and 
\[ H^1(H,\Ic_m^\vee)^G \xrightarrow{\cong} \Hom_G(\Ic_m,J) = \Hom_G(\Ic,J_{m-1}) \]
which are functorial in the natural sense.
Also, the following diagram commutes:
\[ 
\xymatrix{
H^1(H,\Lambda_m^\vee)^G \ar[d]\ar[r] & J_m \ar[d]^{\gamma \mapsto \phi_\gamma} \\
H^1(H,\Ic_m^\vee)^G \ar[r] & \Hom(\Ic,J_{m-1})
}
\]
\end{lem}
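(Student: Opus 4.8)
The plan is to establish each of the two displayed isomorphisms separately and then check the square commutes; everything reduces to the mechanics of Pontryagin duality combined with the fact that $H$ acts trivially on the coefficient modules $\Lambda_m^\vee$ and $\Ic_m^\vee$. First I would treat $\Lambda_m^\vee$. Since this is a finite discrete $\Z/\ell^n$-module with trivial $H$-action, $H^1(H,\Lambda_m^\vee)=\Hom(H,\Lambda_m^\vee)$, and by Pontryagin duality (using that $\Lambda_m$ is finite, so $\Lambda_m^{\vee\vee}=\Lambda_m$) one has $\Hom(H,\Lambda_m^\vee)\cong\Hom(\Lambda_m,H^\vee)=\Hom(\Lambda_m,\Hom(H,\Z/\ell^n))=\Hom(\Lambda_m,J)$, the last equality because $J=H^1(H)=\Hom(H,\Z/\ell^n)$ as $H$ acts trivially on $\Z/\ell^n$. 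The point then is that this composite of natural identifications is $G$-equivariant: the $G$-action on $H^1(H,\Lambda_m^\vee)$ is the usual conjugation-plus-coefficient action, the $G$-action on $\Hom(\Lambda_m,J)$ is the one defined at the start of \S2 via $(\sigma_*f)(m)=\sigma\cdot f(\sigma^{-1}\cdot m)$, and one simply traces through the definitions to see they correspond (the $\sigma^{-1}$ on the $\Lambda_m$ side and the $\sigma$ on the $J$ side pair up exactly as in the conjugation action on cochains of $H$). Taking $G$-invariants then yields $H^1(H,\Lambda_m^\vee)^G\cong\Hom_G(\Lambda_m,J)$, and the further identification $\Hom_G(\Lambda_m,J)\cong J_m$ by $f\mapsto f(1)$ was already recorded in the text (with inverse $\gamma\mapsto(\lambda\mapsto\lambda\gamma)$, well-defined precisely because $\Ic^m\gamma=0$ for $\gamma\in J_m$).

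Next I would run the identical argument with $\Ic_m$ in place of $\Lambda_m$: it is again a finite module with trivial $H$-action, so $H^1(H,\Ic_m^\vee)=\Hom(H,\Ic_m^\vee)\cong\Hom(\Ic_m,J)$ $G$-equivariantly, and taking invariants gives $H^1(H,\Ic_m^\vee)^G\cong\Hom_G(\Ic_m,J)$. The equality $\Hom_G(\Ic_m,J)=\Hom_G(\Ic,J_{m-1})$ is formal: a $G$-map $\Ic\to J$ kills $\Ic^m$ if and only if it lands in $J_{m-1}$ (by the defining property $\Ic^{m-1}J_{m-1}=0$ together with $\Ic\cdot\Ic^{m-1}=\Ic^m$, one checks both inclusions), and any $G$-map $\Ic\to J_{m-1}$ factors through $\Ic_m=\Ic/\Ic^m$ since $\Ic^m\cdot J_{m-1}\subseteq\Ic\cdot(\Ic^{m-1}J_{m-1})=0$. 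Functoriality in the natural sense — with respect to maps of the relevant short exact sequences, or equivalently with respect to the inclusion $\Ic_m\hookrightarrow\Lambda_m$ and its dual — follows because every isomorphism in sight is induced by a genuinely natural transformation (conjugation action, Pontryagin duality, evaluation at $1$), so naturality is inherited.

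Finally, for the commuting square: the left vertical arrow is induced by the surjection $\Lambda_m^\vee\to\Ic_m^\vee$ dual to $\Ic_m\hookrightarrow\Lambda_m$, hence on $H^1(H,-)$ it is change of coefficients along that map; under the Pontryagin-duality identifications this becomes the restriction map $\Hom(\Lambda_m,J)\to\Hom(\Ic_m,J)=\Hom(\Ic,J_{m-1})$. The right vertical arrow $\gamma\mapsto\phi_\gamma$ corresponds, under $J_m\cong\Hom_G(\Lambda_m,J)$, precisely to this restriction map (send $\gamma$ to $f_\gamma:\lambda\mapsto\lambda\gamma$; then $f_\gamma|_{\Ic}:\eta\mapsto\eta\gamma=\phi_\gamma(\eta)$). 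So both routes around the square are the coefficient-restriction map under matching identifications, and commutativity is immediate.

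The routine but slightly delicate point — the one I would be most careful about — is verifying $G$-equivariance of the Pontryagin-duality isomorphism $H^1(H,M^\vee)\cong\Hom(M,J)$ for $M=\Lambda_m,\Ic_m$, i.e. checking that the conjugation action on $1$-cocycles of $H$ with coefficients in $M^\vee$ really does match the action $(\sigma_*f)(m)=\sigma f(\sigma^{-1}m)$ on $\Hom(M,J)$. This is where a sign or an inverse could go wrong, so I would spell out the cocycle computation: for $h\mapsto c(h)\in M^\vee$ a cocycle and $\sigma\in G$ lifting to $\tilde\sigma\in\Gc$, the conjugated cocycle is $h\mapsto\tilde\sigma\cdot c(\tilde\sigma^{-1}h\tilde\sigma)$ (which since $H$ acts trivially on $M^\vee$ and $H$ is normal is just $h\mapsto\tilde\sigma\cdot c(h^{\tilde\sigma^{-1}})$, independent of the lift), and under duality this is exactly $f\mapsto\sigma_*f$. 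Granting that one verification, the rest of the lemma is bookkeeping.
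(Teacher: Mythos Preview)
Your argument is essentially the paper's own: the lemma is stated as a summary of the discussion immediately preceding it, which uses exactly the steps you outline (trivial $H$-action gives $H^1(H,M^\vee)=\Hom(H,M^\vee)$, Pontryagin duality turns this into $\Hom(M,J)$, take $G$-invariants, and the square commutes because both vertical maps are restriction along $\Ic_m\hookrightarrow\Lambda_m$); your extra care with the $G$-equivariance of the duality isomorphism is a welcome elaboration of a point the paper leaves implicit. One small correction: $\Lambda_m$ and $\Ic_m$ need not be finite when $G$ has infinite $\Z/\ell^n$-rank (indeed $\Ic/\Ic^2\cong\prod_i(\Z/\ell^n)\cdot\rho_i$ later in the paper), so your appeal to $\Lambda_m^{\vee\vee}=\Lambda_m$ is not available in general---but you don't actually need it, since the isomorphism $\Hom(H,M^\vee)\cong\Hom(M,H^\vee)=\Hom(M,J)$ is just tensor--Hom adjunction over $\Z/\ell^n$, valid without any finiteness hypothesis.
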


\begin{remark}
First, observe that the isomorphism $H^1(H,\Lambda_m^\vee)^G \xrightarrow{\cong} J_m$ is {\bf not} an isomorphism of $G$-modules.
Indeed, $G$ may act non-trivially on $J_m$ (the action is non-trivial as long as $J_1 \neq J_m$), but it acts trivially on $H^1(H,\Lambda_m^\vee)^G$.

In the Galois situation, if $\Gc = \Gc_K$ for a field $K$ as in the introduction, the lemma above can be seen as a kind of ``invariant Kummer theory'' in the following sense.
One has $H^1(H,\Z/\ell^n) = L^\times/\ell^n$ (using the notation of the introduction) -- this is an isomorphism of $G$-modules.
In particular, $J_1 = J^G = H^1(H,\Z/\ell^n)^G$.
On the other hand, $\Lambda_1 = \Z/\ell^n$ and thus $(\Lambda_1)^\vee = \Z/\ell^n$.
Generalizing to $\Ic^m \leq \Lambda$, we obtain the analogous isomorphism $H^1(H,\Lambda_m^\vee)^G \cong J_m$.
\end{remark}

In the main theorem, we determine the image of the canonical map $J_m \rightarrow \Hom(\Ic,J_{m-1})$ in a cohomological way.
Equivalently, using Lemma \ref{lem:cohom-jm}, we will compute the image of the canonical map:
\[ H^1(H,\Lambda_m^\vee)^G \rightarrow H^1(H,\Ic_m^\vee)^G. \]

\section{Cohomological Invariants}
\label{sec:cohom-invar}

In this section we will define the cohomological obstruction which ensures $\phi \colon \Ic \rightarrow J_{m-1}$ is defined by some $\gamma \in J$ as discussed above.

\begin{defn}
\label{defn:psi}
Consider the spectral sequence associated to the group extension $1 \rightarrow H \rightarrow \Gc \rightarrow G \rightarrow 1$:
\[ H^i(G,H^j(H,M)) \Rightarrow H^{i+j}(\Gc,M) \]
and the differential $d_2 \colon H^1(H,M)^G \rightarrow H^2(G,M)$.

On the other hand, consider the long-exact sequence in cohomology associated to the short exact sequence of $G$-modules:
\[ 0 \rightarrow \Z/\ell^n \rightarrow \Lambda_m^\vee \rightarrow \Ic_m^\vee \rightarrow 0. \]
 And in particular, consider the connecting homomorphism $\delta \colon H^2(G,\Ic_m^\vee) \rightarrow H^3(G,\Z/\ell^n)$.
We denote by $\Psi \colon H^1(H,\Ic_m^\vee) \rightarrow H^3(G,\Z/\ell^n)$ the composition of the two maps:
\[ \Psi \colon H^1(H,\Ic_m^\vee)^G \xrightarrow{d_2} H^2(G,\Ic_m^\vee) \xrightarrow{\delta} H^3(G,\Z/\ell^n). \]
\end{defn}

Before we proceed to prove the main theorem of the paper, we show that $\Psi(\phi)$ can be computed using the group-theoretical structure of $\Gc_\phi$.
First, let us recall a well-known characterization of a differential in the spectral sequence associated to a group extension.
Suppose $1 \rightarrow R \rightarrow \Hf \rightarrow \Gf \rightarrow 1$ is an arbitrary extension of pro-$\ell$ groups and let $M$ be an $\Gf$-module.
Consider the associated extension 
\[ \xi\colon \ 1 \rightarrow \frac{R}{[R,R]} \rightarrow \frac{\Hf}{[R,R]} \rightarrow \Gf \rightarrow 1 \] 
and note that $\Hf/[R,R]$ is the pushout in the following square:
\[ 
\xymatrix{
R \ar@{->>}[d] \ar[r] & \Hf \ar@{->>}[d] \\
R^{\rm{ab}} = \frac{R}{[R,R]} \ar[r] & \frac{\Hf}{[R,R]}
}
\]
The group extension $\xi$ defines a canonical element $\alpha_{\Hf} \in H^2(\Gf,R^{\rm{ab}})$.
On the other hand, one has a canonical pairing $R^{\rm{ab}} \times H^1(R,M) \rightarrow M$ since $H^1(R,M) = \Hom(R^{\rm{ab}},M)$ (recall that $R$ acts trivially on $M$).
Thus, we obtain a cup product:
\[ H^0(\Gf,H^1(R,M)) \times H^2(\Gf,R^{\rm{ab}}) \rightarrow H^2(\Gf,M).  \]
\begin{prop}
In the notation above, the differential 
\[d_2 \colon H^0(\Gf,H^1(R,M)) = E^{0,1}_2 \rightarrow E^{2,0}_2 = H^2(G,M)\]
corresponds to the pairing with $\alpha_{\Hf}$.
More precisely, $d_2(x) = -x \cup \alpha_{\Hf}$.
\end{prop}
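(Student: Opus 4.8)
The goal is to prove the standard description of the transgression-type differential $d_2 \colon H^0(\Gf, H^1(R,M)) \to H^2(\Gf, M)$ in terms of cup product with the extension class $\alpha_{\Hf} \in H^2(\Gf, R^{\mathrm{ab}})$. The plan is to reduce the general case to the universal one, where everything is essentially tautological, and then transport the identity via naturality.

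First I would set up the reduction along the pushout square displayed just before the proposition. Since $R$ acts trivially on $M$, any cocycle or homomorphism out of $R$ into $M$ factors through $R^{\mathrm{ab}}$, and likewise the group $H^1(R,M) = \Hom(R^{\mathrm{ab}},M)$ and the differential $d_2$ depend only on the extension $\xi \colon 1 \to R^{\mathrm{ab}} \to \Hf/[R,R] \to \Gf \to 1$; this is because the Lyndon--Hochschild--Serre spectral sequence is functorial with respect to the map of extensions from $1 \to R \to \Hf \to \Gf \to 1$ to $\xi$, and on the relevant $E_2$-terms this map is an isomorphism. So without loss of generality one may assume $R = R^{\mathrm{ab}}$ is abelian and central-ish in the sense that it is an abelian normal subgroup; write $A = R$ and $1 \to A \to \Hf \to \Gf \to 1$ with extension class $\alpha_{\Hf} \in H^2(\Gf, A)$ (here $A$ is a $\Gf$-module via conjugation, which is exactly the action making $H^1(A,M) = \Hom(A,M)$ a $\Gf$-module as defined in \S2).

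Next I would compute $d_2$ explicitly on cochains. Take $x \in H^0(\Gf, \Hom(A,M)) = \Hom_\Gf(A,M)$, i.e. a $\Gf$-equivariant homomorphism $f \colon A \to M$. Lift the extension to a set-theoretic section $s \colon \Gf \to \Hf$, so that the $2$-cocycle $c(\sigma,\tau) = s(\sigma)s(\tau)s(\sigma\tau)^{-1} \in A$ represents $\alpha_{\Hf}$. The differential $d_2$ in the LHS spectral sequence is computed by the usual ``lift, apply the coboundary, descend'' procedure: view $f$ as an element of $H^1(A,M)^\Gf$, lift it to a $1$-cochain on $\Hf$ with values in $M$ (using the section $s$ to build the lift $\tilde f(h) = f(\text{the }A\text{-component of }h\text{ relative to }s)$, which is well defined because $f$ is a homomorphism out of $A$), apply the coboundary $\partial$, observe the result is inflated from $\Gf$, and identify it. A short computation gives $\partial \tilde f(\sigma,\tau) = \pm f(c(\sigma,\tau))$, which is precisely $\pm (f \cup \alpha_{\Hf})(\sigma,\tau)$ under the pairing $\Hom(A,M) \times A \to M$. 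Tracking the sign conventions through this computation is the part that requires genuine care — the sign $-1$ in $d_2(x) = -x \cup \alpha_{\Hf}$ is exactly the discrepancy between the coboundary convention in the spectral sequence and the cup-product convention, and getting it right means fixing, once and for all, the normalization of the edge map, the pushforward $\Gf$-action on $A$, and the order of the factors in the cup product. This sign-bookkeeping is where I expect the only real friction; the rest is formal.

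Finally, I would remark that the identity is natural in $(\Hf, \Gf, M)$, which both makes the reduction to the abelian case in the first step legitimate and will be what is actually used downstream when the proposition is applied to $\Gc_\phi$ to get the group-theoretical recipe for $\Psi(\phi)$. An alternative to the explicit cochain computation, which I would mention but probably not carry out, is to invoke the known description of $d_2$ as cup product with the ``Eilenberg--MacLane'' extension class in the universal case $\Hf = $ the universal extension of $\Gf$ by a free module, and then pull back along the classifying map $\Gf$-equivariantly; this avoids choosing a section but hides the sign inside the universal computation, so it is really the same argument in disguise. Either way the proof is standard and I would keep it brief, citing a reference such as Neukirch--Schmidt--Wingberg for the cochain-level formula for $d_2$ and emphasizing only the identification of the output with the cup product and the placement of the sign.
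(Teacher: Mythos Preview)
Your proposal is correct and in fact far more detailed than what the paper does: the paper's entire proof is a one-line citation ``See \cite{Neukirch2008} Theorem 2.4.4.'' You yourself anticipate this at the end of your proposal, and indeed the reduction to the abelian kernel plus the lift--coboundary--descend cochain computation you outline is exactly the argument carried out in that reference, so there is no substantive difference in approach---only in how much is spelled out.
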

\begin{proof}
See \cite{Neukirch2008} Theorem 2.4.4.
\end{proof}

Given an element $\phi \in \Hom_G(\Ic_m,J) = \Hom_G(\Ic,J_{m-1})$, we produce the corresponding group $\Gc_\phi$ as above (we denote by $\phi$ the corresponding element in $H^1(H,\Ic_m^\vee)^G$ following Lemma \ref{lem:cohom-jm}).
Recall that $\Gc_\phi$ fits as a group extension:
\[ 1 \rightarrow (\Imm \phi)^\vee \rightarrow \Gc_\phi \rightarrow G \rightarrow 1 \]
and that $(\Imm \phi)^\vee$ is abelian.
In particular, we obtain an element $\alpha_{\phi} := \alpha_{\Gc_\phi} \in H^2(G,(\Imm\phi)^\vee)$.
We denote by $\beta_\phi$ the image of $\alpha_\phi$ under the canonical map $H^2(G,(\Imm\phi)^\vee) \rightarrow H^2(G,\Ic^\vee)$.
Thus, one has the following equality of the differential $d_2(\phi) = -\phi \cup \beta_\phi \in H^2(G,\Ic^\vee)$ by the proposition above, along with the functoriality of the spectral sequences under consideration.
Clearly, the boundary morphism $\delta \colon H^2(G,\Ic^\vee) \rightarrow H^2(G,\Z/\ell^n)$ can be computed using only $G$ so that the composition $\Psi(\phi) = \delta(-\phi \cup \beta_\phi)$ can be computed using $\Gc_\phi$ along with the projection $\Gc_\phi \twoheadrightarrow G$.
Moreover, $G = \Gcm{2}_\phi = \Gc_\phi/\Gc_\phi^{(2)}$ and so the quotient $\Gc_\phi \twoheadrightarrow G$ can be computed group-theoretically, as required.

\begin{remark}
\label{remark:Psi-vs-agkm}
We compare $\Psi$ with the formula given in \cite{AGKM} Theorem 4.1.
Recall that $\Ic_2 = \Ic/\Ic^2$.
In this case, we can provide an explicit alternative formula for $\Psi(\phi)$ as follows.
Let $(x_i)_i$ be a $\Z/\ell^n$-basis for $H^1(G)$ and consider the minimal convergent generating set $(\sigma_i)_i$ for $G$ which is dual to $(x_i)_i$.
Then $\Ic = \langle(\sigma_i - 1) \rangle_i$, and $\Ic/\Ic^2$ is a trivial $G$-module, isomorphic to $\prod_i \Z/\ell^n $ with a minimal topological $\Z/\ell^n$-generating set given by $(\sigma_i-1) =: \rho_i$; namely, the canonical map \[\eta \colon \prod_i (\Z/\ell^n) \cdot \rho_i \rightarrow \Ic/\Ic^2\] is an isomorphism.
Indeed, $\sigma \cdot (\tau-1) = (\tau-1) \mod \Ic^2$ since $(\tau-1)(\sigma-1) \in \Ic^2$ -- thus $\Ic/\Ic^2$ is a trivial module.
Observe that $(\sigma\tau-1) = (\sigma-1)(\tau-1)+(\sigma-1)+(\tau-1)$ and so indeed $(\rho_i)_i$ are generators of $\Ic/\Ic^2$.
Moreover, one easily deduces that these are free generators for $\Ic/\Ic^2$ since the $(\sigma_i)$ are a minimal generating set for $G$ and we explicitly assume that $G$ is $(\Z/\ell^n)$-torsion-free.

Thus, $H^1(H,\Ic_2^\vee)^G \cong \Hom_G(\Ic/\Ic^2,J) \rightarrow \bigoplus_i J^G$ defined by $\phi \mapsto (\phi(\rho_i))_i$ is an isomorphism.
On the other hand, one has $J^G = H^1(H)^G$.
By functoriality, the following diagram commutes:
\[
\xymatrix{
H^1(H,\Ic_2^\vee)^G \ar[r]^{d_2} \ar[d]_\cong & H^2(G,\Ic_2^\vee) \ar[d]^{\cong} \\
\bigoplus_i H^1(H,\Z/\ell^n)^G \ar[r]_{\bigoplus_i d_2} & \bigoplus_i H^2(G,\Z/\ell^n)
}\]
where the vertical isomorphisms are induced by the isomorphism $\eta \colon \prod_i (\Z/\ell^n) \cdot \rho_i \rightarrow \Ic/\Ic^2$.
What remains is to calculate the boundary homomorphism $\delta \colon H^2(G,\Ic_2^\vee) \rightarrow H^3(G,\Z/\ell^n)$ which we do in the following lemma:
\begin{lem}
\label{lem:psi-vs-agkm}
Let $(\sigma_i)_i$ be a minimal generating set for $G$ with dual basis $(x_i)_i$ for $H^1(G,\Z/\ell^n)$.
Then $\Ic_2 = \Ic/\Ic^2$ is isomorphic as a (trivial) $G$-module to the module:
\[ \prod_i (\Z/\ell^n) \cdot \rho_i \]
where $\rho_i = \sigma_i-1$.
Under this identification, one has an isomorphism:
\[ H^2(G,\Ic_2^\vee) \rightarrow \bigoplus_i H^2(G,\Z/\ell^n) \]
defined on the level of cocycles by
\[ \xi(\bullet,\bullet) \mapsto \sum_i \xi(\bullet,\bullet)(\rho_i). \]
Via this isomorphism, the connecting homomorphism $\delta \colon \bigoplus_i H^2(G,\Z/\ell^n) \cong H^2(G,\Ic_2^\vee)  \rightarrow H^3(G,\Z/\ell^n)$ is given by:
\[ \delta\left(\sum_i \xi_i\right) = \sum_i -x_i \cup \xi_i. \]
\end{lem}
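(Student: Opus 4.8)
The plan is to reduce the statement to an explicit computation of the connecting homomorphism at the level of cochains. The identification of $\Ic_2 = \Ic/\Ic^2$ with the free trivial $\Z/\ell^n$-module $\prod_i (\Z/\ell^n)\rho_i$ was already carried out in the discussion above, using only that $(\sigma_i)_i$ is a minimal convergent generating set and that $G$ is $\Z/\ell^n$-torsion-free, together with the congruences $(\sigma\tau - 1)\equiv(\sigma-1)+(\tau-1)$ and $(\tau-1)(\sigma-1)\in\Ic^2$ modulo $\Ic^2$. Dualizing gives $\Ic_2^\vee = \bigoplus_i (\Z/\ell^n)\rho_i^\ast$ with $\rho_i^\ast$ the dual topological basis; since continuous cochain cohomology commutes with filtered colimits of discrete modules, applied to the finite summands $\bigoplus_{i\in S}(\Z/\ell^n)\rho_i^\ast$ (equivalently: prove the finitely generated case first and pass to the limit), one obtains the asserted isomorphism $H^2(G,\Ic_2^\vee)\cong\bigoplus_i H^2(G,\Z/\ell^n)$, whose $i$-th component sends a cocycle $\xi$ to $(\sigma,\tau)\mapsto\xi(\sigma,\tau)(\rho_i)$. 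Thus the only real content is the formula for $\delta$.

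To compute $\delta$ I would work straight from its definition — lift a cocycle and apply the coboundary. First set up the dual picture: the augmentation $\Lambda_2\to\Z/\ell^n$ splits $\Z/\ell^n$-linearly (not $G$-equivariantly) via $1\mapsto 1$, so $\Lambda_2 = (\Z/\ell^n)\cdot 1\oplus\prod_i(\Z/\ell^n)\rho_i$ as $\Z/\ell^n$-modules, hence $\Lambda_2^\vee$ has dual topological basis $\{1^\ast\}\cup\{\rho_i^\ast\}$, the surjection $\Lambda_2^\vee\to\Ic_2^\vee$ kills $1^\ast$ and is the identity on each $\rho_i^\ast$, and $1^\ast$ spans the copy of $\Z/\ell^n$ in $0\to\Z/\ell^n\to\Lambda_2^\vee\to\Ic_2^\vee\to 0$. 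The crucial step is the $G$-action on $\Lambda_2^\vee$: writing $\sigma=\prod_i\sigma_i^{x_i(\sigma)}$ one has $\sigma^{-1}\cdot 1\equiv 1 - \sum_i x_i(\sigma)\rho_i$ and $\sigma^{-1}\cdot\rho_k\equiv\rho_k$ modulo $\Ic^2$, so from $(\sigma\cdot f)(\lambda)=f(\sigma^{-1}\lambda)$ one gets $\sigma\cdot 1^\ast = 1^\ast$ and $\sigma\cdot\rho_j^\ast = -x_j(\sigma)\,1^\ast + \rho_j^\ast$. The minus sign in the statement originates precisely here, from $x_i(\sigma^{-1})=-x_i(\sigma)$.

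With these formulas the computation finishes quickly. Write a $2$-cocycle valued in $\Ic_2^\vee$ as $\xi=\sum_i\xi_i\rho_i^\ast$ with $\xi_i\in Z^2(G,\Z/\ell^n)$, lift it to the $\Lambda_2^\vee$-valued cochain $\widetilde\xi:=\sum_i\xi_i\rho_i^\ast$ (same formula), and apply the coboundary $d\widetilde\xi(\sigma,\tau,\nu)=\sigma\cdot\widetilde\xi(\tau,\nu) - \widetilde\xi(\sigma\tau,\nu) + \widetilde\xi(\sigma,\tau\nu) - \widetilde\xi(\sigma,\tau)$. The $\rho_i^\ast$-components are the $d\xi_i$, which vanish, and only the first term contributes to the $1^\ast$-component, giving $d\widetilde\xi(\sigma,\tau,\nu) = \bigl(-\sum_i x_i(\sigma)\xi_i(\tau,\nu)\bigr)1^\ast$. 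Identifying $1^\ast$ with $1\in\Z/\ell^n$, this is a representative of $\sum_i -x_i\cup\xi_i$ (since $x_i\cup\xi_i$ is represented by $(\sigma,\tau,\nu)\mapsto x_i(\sigma)\xi_i(\tau,\nu)$ for trivial coefficients), proving $\delta(\sum_i\xi_i)=\sum_i -x_i\cup\xi_i$. The main obstacle is purely bookkeeping: keeping the conventions for the contragredient $G$-action, the cup product, and the connecting map mutually consistent so that the signs come out exactly as stated, and making the passage to infinitely many generators rigorous via the colimit argument above.
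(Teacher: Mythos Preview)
Your proof is correct and follows essentially the same approach as the paper: both compute $\delta$ by choosing the $\Z/\ell^n$-linear (non-equivariant) splitting of $\Lambda_2^\vee \twoheadrightarrow \Ic_2^\vee$ that sends $f$ to the extension with $\tilde f(1)=0$, and then evaluating the coboundary of the lifted cochain. The only cosmetic difference is that you phrase things in terms of the dual basis $\{1^\ast\}\cup\{\rho_i^\ast\}$ and the explicit formula $\sigma\cdot\rho_j^\ast = -x_j(\sigma)1^\ast + \rho_j^\ast$, whereas the paper evaluates functionals directly at $1\in\Lambda_2$ via $[a\cdot\tilde\xi(b,c)](1)=[\tilde\xi(b,c)](a^{-1})$ and then expands $a^{-1}-1 \equiv -\sum_i x_i(a)\rho_i$; these are the same computation written two ways.
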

\begin{proof}
The discussion preceding the statement of the lemma proves the first two claims.
Let us now calculate $\delta$.
Observe that the short exact sequence
\[ 1 \rightarrow \Z/\ell^n \rightarrow \Lambda_2^\vee \rightarrow \Ic_2^\vee \rightarrow 1 \]
is split by a section $\widetilde{(\bullet)} \colon \Ic_2^\vee \rightarrow \Lambda_2^\vee$ which is a $\Z/\ell^n$-linear homomorphism sending $f \colon \Ic_2 \rightarrow \Z/\ell^n$ to the unique homomorphism $\Lambda_2 \rightarrow \Z/\ell^n$ defined by 
\[ \tilde f(\rho_i) = f(\rho_i), \ \ \tilde f(1) = 0. \]
Now take a cocycle $\xi \colon G\times G \rightarrow \Ic_2^\vee$ and consider the map $\tilde\xi \colon G \times G \rightarrow \Lambda_2^\vee$ defined by $\tilde \xi(\sigma,\tau) = \widetilde {\xi(\sigma,\tau)}$.
Then $\delta(\xi) = d\tilde\xi$ has a representing cocycle given by:
\begin{eqnarray*}
\delta(\xi)(a,b,c) &=& [a\tilde\xi(b,c)](1)-[\tilde\xi(ab,c)](1)+[\tilde\xi(a,bc)](1)-[\tilde\xi(a,b)](1) \\
 &=& [\tilde\xi(b,c)](a^{-1})-[\tilde\xi(b,c)](1) \\
 &=& [\tilde\xi(b,c)](a^{-1}-1) \\
 &=& -[\tilde\xi(b,c)](a-1)
\end{eqnarray*}
Thus, writing $a = \prod_i \sigma_i^{a_i}$ we have using the discussion of Remark \ref{remark:Psi-vs-agkm}:
\begin{eqnarray*}
 \delta(\xi)(a,b,c) &=& -[\tilde\xi(b,c)](a-1) \\
 &=& -[\tilde\xi(b,c)]\left(\sum_i a_i \rho_i\right) \\ 
 &=& \sum_i -a_i\xi(b,c)(\rho_i)
\end{eqnarray*}
and thus we've deduced that:
\[ \delta(\xi) = \sum_i -x_i \cup [\xi(\bullet,\bullet)](\rho_i). \]
as required.
\end{proof}

Under the identifications in the discussion above:
\begin{itemize}
\item $\Hom(\Ic,J_1) = H^1(H,\Ic_2^\vee)^G$
\item $J_1 = H^1(H,\Z/\ell^n)^G$
\end{itemize}
We deduce that the map $\Psi \colon \Hom(\Ic,J_1) \rightarrow H^3(G,\Z/\ell^n)$ is given by:
\[ \Psi(\phi) = \sum_i -x_i \cup d_2(\phi(\rho_i)). \]

If $\ell^n = 2$, $\Gc = \Gc_K$ for an arbitrary field $K$ of characteristic different from $2$, this formula for $\Psi(\phi)$ is precisely the one given in \cite{AGKM} Theorem 4.1, if we consider $\phi \colon \Ic \rightarrow J_1$ as a sequence $(\gamma_i)_i$ indexed by $\rho_i$ where almost all $\gamma_i = 0$ as in loc.cit.
\end{remark}

\section{Proof of Main Theorem}

Using the discussion above, we can rephrase Theorem \ref{thm:main-intro} as follows.

\begin{thm}
\label{thm:main-thm}
Let $\Gc$ be a pro-$\ell$ group and $G = \Gcm{2} = \Gc/\Gc^{(2)}$ as above (i.e. $G \cong \prod_i \Z/\ell^n$).
Let $\phi \in H^1(H,\Ic_m^\vee)^G$ be given.
Consider the following statements:
\begin{enumerate}
\item There exists $\gamma \in H^1(H,\Lambda_m^\vee)^G$ such that $\phi = \phi_\gamma$ -- i.e. $\phi$ is contained in the image of the canonical map $H^1(H,\Lambda_m^\vee)^G \rightarrow H^1(H,\Ic_m^\vee)^G$.
 \item $\Psi(\phi) = 0 \in H^3(G,\Z/\ell^n)$ (see Definition \ref{defn:psi} for the definition of $\Psi$).
\end{enumerate}
Then (1) implies (2).
Assume furthermore that the inflation map $H^2(G) \rightarrow H^2(\Gc)$ is surjective, then (1) and (2) are equivalent.
\end{thm}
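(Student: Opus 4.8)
The plan is to reformulate statement (1) as the vanishing of a single connecting homomorphism into $H^1(G,J)$, to identify $\Psi$ with the composite of that connecting map with a differential of the Lyndon--Hochschild--Serre spectral sequence of $1\to H\to\Gc\to G\to 1$, and then to recognize the hypothesis on inflation as exactly the injectivity of that differential. \emph{Step 1.} Since the coefficient sequence $0\to\Z/\ell^n\to\Lambda_m^\vee\to\Ic_m^\vee\to 0$ is split over $\Z/\ell^n$ and $H$ acts trivially on all three terms, applying $H^1(H,-)=\Hom(H,-)$ yields a short exact sequence of $G$-modules
\[ 0\to J\to H^1(H,\Lambda_m^\vee)\xrightarrow{p_*} H^1(H,\Ic_m^\vee)\to 0, \]
where surjectivity of $p_*$ comes from composing with the $\Z/\ell^n$-linear section. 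The beginning of the associated long exact sequence in $G$-cohomology reads
\[ 0\to J^G\to H^1(H,\Lambda_m^\vee)^G\xrightarrow{p_*} H^1(H,\Ic_m^\vee)^G\xrightarrow{\Delta} H^1(G,J), \]
and by Lemma \ref{lem:cohom-jm} the first map here is precisely $\gamma\mapsto\phi_\gamma$. Hence (1) is equivalent to $\Delta(\phi)=0$.

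\emph{Step 2 (the key identity).} Let $d_2\colon H^1(G,J)=E_2^{1,1}\to E_2^{3,0}=H^3(G,\Z/\ell^n)$ denote the differential of the Lyndon--Hochschild--Serre spectral sequence with $\Z/\ell^n$-coefficients. The heart of the proof is the identity $\Psi(\phi)=\pm\,d_2(\Delta(\phi))$. To establish it one uses that the $\Z/\ell^n$-split coefficient sequence of Step 1 induces a short exact sequence, split in each column, of the first-quadrant double complexes $C^\bullet(G,C^\bullet(H,-))$ computing the three copies of the spectral sequence; this gives a long exact sequence relating their pages, whose connecting maps commute up to sign with the differentials $d_r$. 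On the edge $E_2^{\bullet,0}=H^\bullet(G,-)$ this connecting map is the boundary $\delta$ of Definition \ref{defn:psi}, and in bidegree $(0,1)$ it is the map $\Delta$ of Step 1. Chasing $\phi\in H^1(H,\Ic_m^\vee)^G=E_2^{0,1}$ for $\Ic_m^\vee$ two ways around the resulting square --- along the transgression $H^1(H,\Ic_m^\vee)^G\to H^2(G,\Ic_m^\vee)$ followed by $\delta$, which by Definition \ref{defn:psi} is $\Psi(\phi)$, versus along $\Delta$ followed by $d_2$ --- gives the identity. In the case $m=2$ this specializes to the explicit formula of Remark \ref{remark:Psi-vs-agkm}, which serves as a consistency check; there one may also argue concretely using the description of $d_2$ as cup product with an extension class together with the fact that $d_2$ is a derivation over $H^\bullet(G,\Z/\ell^n)$. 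I expect this compatibility of the spectral sequence with the long exact coefficient sequence to be the main technical obstacle: it is essentially formal, but demands careful bookkeeping with the double complexes and a coherent treatment of signs.

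\emph{Step 3 (conclusion).} Granting the identity, $(1)\Rightarrow(2)$ is immediate, since $\Delta(\phi)=0$ forces $\Psi(\phi)=\pm d_2(\Delta(\phi))=0$; alternatively, without the identity, if $\phi=\phi_\gamma$ then by functoriality of the transgression the class $d_2(\phi)\in H^2(G,\Ic_m^\vee)$ lies in the image of $p_*\colon H^2(G,\Lambda_m^\vee)\to H^2(G,\Ic_m^\vee)$, and $\delta\circ p_*=0$ in the long exact sequence of $0\to\Z/\ell^n\to\Lambda_m^\vee\to\Ic_m^\vee\to 0$, so $\Psi(\phi)=0$. For the converse, assume inflation $H^2(G)\to H^2(\Gc)$ is surjective and $\Psi(\phi)=0$; then $d_2(\Delta(\phi))=0$, i.e.\ $\Delta(\phi)\in\ker\!\big(d_2\colon E_2^{1,1}\to E_2^{3,0}\big)$. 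For degree reasons no further differentials enter or leave $E_\bullet^{1,1}$, so this kernel equals $E_\infty^{1,1}=F^1 H^2(\Gc,\Z/\ell^n)/F^2 H^2(\Gc,\Z/\ell^n)$, where $F^2 H^2(\Gc,\Z/\ell^n)$ is the image of inflation. The surjectivity hypothesis forces $F^2 H^2(\Gc,\Z/\ell^n)=H^2(\Gc,\Z/\ell^n)$, hence $E_\infty^{1,1}=0$, hence $\Delta(\phi)=0$; by Step 1 this is statement (1).
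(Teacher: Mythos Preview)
Your proof is correct and takes a genuinely different route from the paper's. The paper first performs a diagram chase (its Lemma~\ref{lem:comm-diagram}) to reduce the implication $(2)\Rightarrow(1)$ to the statement that $\ker\big(d_2\colon H^1(H,\Ic_m^\vee)^G\to H^2(G,\Ic_m^\vee)\big)$ is already contained in the image of $H^1(H,\Lambda_m^\vee)^G$; it then proves this inclusion by a descending induction on $m$ (its Proposition~\ref{prop:ker-d2}), using the filtration by $\Ic^m/\Ic^{m+1}$ and the observation that transgression is injective on trivial $\ell^n$-torsion coefficients. By contrast, you bypass the induction entirely: you recast $(1)$ as the vanishing of a single connecting map $\Delta$ landing in $H^1(G,J)=E_2^{1,1}$ for $\Z/\ell^n$-coefficients, identify $\Psi$ with $\pm d_2\circ\Delta$, and then kill $\ker d_2=E_\infty^{1,1}$ directly from the surjectivity of inflation. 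Your approach is more conceptual and makes the role of the hypothesis transparent---it is exactly the vanishing of the graded piece $E_\infty^{1,1}$---at the cost of invoking the (standard but somewhat technical) compatibility of the connecting homomorphisms $\Delta,\delta$ with the spectral-sequence differentials. This compatibility can be made concrete, and the sign ambiguity resolved, by using the cup-product description $d_2(x)=-x\cup\alpha_{\Gc}$ of the differential $E_2^{p,1}\to E_2^{p+2,0}$ (valid for all $p$, not only $p=0$) together with the usual naturality of cup products under boundary maps; this is perhaps a cleaner justification of your Step~2 than the double-complex bookkeeping you sketch. The paper's inductive argument, on the other hand, stays closer to elementary diagram chasing and avoids invoking that extra spectral-sequence formalism.
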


The fact that $\Psi(\phi)$ can be computed using $\Gc_\phi$ was shown in \S\ref{sec:cohom-invar}.
Furthermore, the fact that (1) implies (2) is trivial by the definition of $\Psi$ (see e.g. the bottom-left corner of the diagram of Lemma \ref{lem:comm-diagram}).
The remainder of the paper will be devoted to showing that (2) implies (1) under the added assumption that the inflation $H^2(G) \rightarrow H^2(\Gc)$ is surjective.

\begin{lem}
\label{lem:comm-diagram}
The following diagram is commutative with exact rows and columns:
\[
\xymatrix{
{} & H^2(G,\Z/\ell^n) \ar[d]\ar[r] & H^2(\Gc,\Z/\ell^n) \ar[d]\ar[r] & 0 \\
H^1(H,\Lambda_m^\vee)^G \ar[d]\ar[r]^{d_2} & H^2(G,\Lambda_m^\vee) \ar[d]\ar[r] & H^2(\Gc,\Lambda_m^\vee)\ar[d] \\
H^1(H,\Ic_m^\vee)^G \ar[dr]_\Psi \ar[r]^{d_2} & H^2(G,\Ic_m^\vee) \ar[d]\ar[r] & H^2(\Gc,\Ic_m^\vee) \\
{} & H^3(G,\Z/\ell^n)
}
\]
\end{lem}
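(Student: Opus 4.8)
The plan is to realize the diagram as an overlay of two standard exact-sequence machines and then to obtain commutativity for free from their functoriality, so that no genuine computation is required. The two inputs are: first, the Hochschild--Serre low-degree (``five-term'') exact sequence of the extension $1 \rightarrow H \rightarrow \Gc \rightarrow G \rightarrow 1$, applied with a varying coefficient module; and second, the long exact cohomology sequence of the short exact sequence of $G$-modules $0 \rightarrow \Z/\ell^n \rightarrow \Lambda_m^\vee \rightarrow \Ic_m^\vee \rightarrow 0$ (the same sequence that defines $\delta$, hence $\Psi$, in Definition~\ref{defn:psi}).

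For the rows, I would write down, for each $G$-module $M \in \{\Z/\ell^n, \Lambda_m^\vee, \Ic_m^\vee\}$ inflated to $\Gc$ (so that $H$ acts trivially on $M$), the five-term exact sequence
\[ 0 \rightarrow H^1(G,M) \xrightarrow{\Inf} H^1(\Gc,M) \xrightarrow{\res} H^1(H,M)^G \xrightarrow{d_2} H^2(G,M) \xrightarrow{\Inf} H^2(\Gc,M). \]
Taking $M = \Lambda_m^\vee$ and $M = \Ic_m^\vee$, the tails $H^1(H,M)^G \xrightarrow{d_2} H^2(G,M) \xrightarrow{\Inf} H^2(\Gc,M)$ are rows $2$ and $3$, and exactness at $H^2(G,M)$ is built in. For $M = \Z/\ell^n$ the relevant tail is $H^2(G,\Z/\ell^n) \xrightarrow{\Inf} H^2(\Gc,\Z/\ell^n)$; to append the $0$ on the right of row $1$ I must invoke the standing hypothesis that this particular inflation map is surjective. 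This is the \emph{only} place the hypothesis enters, and it should be flagged explicitly: the lemma is to be understood under that assumption, consistently with the sentence preceding it.

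For the columns, I would take the degree-$2$ portions of the long exact cohomology sequence of $0 \rightarrow \Z/\ell^n \rightarrow \Lambda_m^\vee \rightarrow \Ic_m^\vee \rightarrow 0$, in $G$-cohomology and in $\Gc$-cohomology:
\[ H^2(\Gamma,\Z/\ell^n) \rightarrow H^2(\Gamma,\Lambda_m^\vee) \rightarrow H^2(\Gamma,\Ic_m^\vee) \xrightarrow{\delta} H^3(\Gamma,\Z/\ell^n), \qquad \Gamma \in \{G,\Gc\}. \]
For $\Gamma = G$ this is column $2$ (continued down to $H^3(G,\Z/\ell^n)$, the target of the diagonal), and for $\Gamma = \Gc$ it is column $3$ (truncated at $H^2(\Gc,\Ic_m^\vee)$); exactness at the interior nodes $H^2(\Gamma,\Lambda_m^\vee)$ and at $H^2(G,\Ic_m^\vee)$ is part of the sequence. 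Here I would point out that the leftmost column is merely the single arrow $H^1(H,\Lambda_m^\vee)^G \rightarrow H^1(H,\Ic_m^\vee)^G$ induced by $\Lambda_m^\vee \twoheadrightarrow \Ic_m^\vee$ (the assertion that it continues exactly via $\Psi$ is the content of the main theorem, not of this lemma), and that $H^2(\Gc,\Z/\ell^n) \rightarrow H^2(\Gc,\Lambda_m^\vee)$ need not be injective; so ``exact rows and columns'' must be read as exactness at interior nodes only.

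It remains to check commutativity, which is pure naturality. The square comparing the two $d_2$'s in rows $2$ and $3$ commutes because the transgression $d_2\colon E_2^{0,1} \rightarrow E_2^{2,0}$ of the Hochschild--Serre spectral sequence is natural in the coefficient module; the square comparing the two $\Inf$ maps in columns $2$ and $3$ commutes because $\Inf$ is a natural transformation $H^\ast(G,-) \rightarrow H^\ast(\Gc,-)$; and the top square commutes for the same reason, the vertical maps now being those induced by $\Z/\ell^n \hookrightarrow \Lambda_m^\vee$. Finally the diagonal triangle $H^1(H,\Ic_m^\vee)^G \xrightarrow{d_2} H^2(G,\Ic_m^\vee) \xrightarrow{\delta} H^3(G,\Z/\ell^n)$ commutes by the definition of $\Psi$. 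I do not expect a real obstacle: the content is entirely functoriality together with bookkeeping; the only genuinely delicate points are the correct reading of ``exact'' and the explicit appeal to the surjectivity hypothesis needed to close row $1$.
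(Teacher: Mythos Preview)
Your proposal is correct and follows essentially the same approach as the paper: the rows are identified as pieces of the five-term exact sequence of the Hochschild--Serre spectral sequence for $1 \rightarrow H \rightarrow \Gc \rightarrow G \rightarrow 1$, the columns as pieces of the long exact sequences for $0 \rightarrow \Z/\ell^n \rightarrow \Lambda_m^\vee \rightarrow \Ic_m^\vee \rightarrow 0$, commutativity follows from functoriality, and the surjectivity in the top row is the explicit hypothesis of the main theorem. The paper's own proof says exactly this in three sentences; your version is more detailed (and your caveats about reading ``exact'' at interior nodes and flagging where the hypothesis enters are apt), but the argument is the same.
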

\begin{proof}
The commutativity of the diagram follows from the functoriality of the exact sequences in cohomology.
The vertical columns are portions of the long exact sequences in cohomology while the rows are portions of the exact sequence arising from the spectral sequence associated to the group extension
\[ 1 \rightarrow H \rightarrow \Gc \rightarrow G \rightarrow 1. \]
The surjectivity of the top row is the explicit assumption in the main theorem.
\end{proof}
A diagram chase using the diagram in Lemma \ref{lem:comm-diagram} shows that for $\phi \in H^1(H,\Ic_m^\vee)^G$ the following are equivalent:
\begin{itemize}
 \item $\Psi(\phi) = 0$.
 \item There exists some $\phi' \in \ker(d_2 \colon H^1(H,\Ic_m^\vee)^G \rightarrow H^2(G,\Ic_m))$ such that $\phi' + \phi$ is in the image of $H^1(H,\Lambda_m^\vee)^G \rightarrow H^1(H,\Ic_m^\vee)^G$.
\end{itemize}

We have thus reduced the proof to the following proposition.

\begin{prop}
 \label{prop:ker-d2}
The kernel of $d_2 \colon H^1(H,\Ic_m^\vee)^G \rightarrow H^2(G,\Ic_m^\vee)$ is contained in the image of $H^1(H,\Lambda_m^\vee)^G \rightarrow H^1(H,\Ic_m^\vee)^G$.
\end{prop}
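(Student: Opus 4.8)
The plan is to analyze the kernel of $d_2 \colon H^1(H,\Ic_m^\vee)^G \to H^2(G,\Ic_m^\vee)$ by unwinding what this kernel means and then lifting explicitly. The key structural input is the five-term exact sequence of low-degree terms coming from the Lyndon--Hochschild--Serre spectral sequence of $1 \to H \to \Gc \to G \to 1$ with coefficients in $\Ic_m^\vee$: namely
\[ 0 \to H^1(G,\Ic_m^\vee) \xrightarrow{\Inf} H^1(\Gc,\Ic_m^\vee) \xrightarrow{\res} H^1(H,\Ic_m^\vee)^G \xrightarrow{d_2} H^2(G,\Ic_m^\vee). \]
So $\ker d_2$ is precisely the image of the restriction map $H^1(\Gc,\Ic_m^\vee) \to H^1(H,\Ic_m^\vee)^G$. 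Thus I would start by fixing $\phi \in \ker d_2$, choosing a class $\tilde\phi \in H^1(\Gc,\Ic_m^\vee)$ restricting to it, and then aim to lift $\tilde\phi$ along the surjection $\Lambda_m^\vee \twoheadrightarrow \Ic_m^\vee$ of $G$-modules (hence of $\Gc$-modules, via $\Gc \to G$) to a class in $H^1(\Gc,\Lambda_m^\vee)$. If such a lift exists, restricting it to $H$ and taking $G$-invariants produces a preimage of $\phi$ under $H^1(H,\Lambda_m^\vee)^G \to H^1(H,\Ic_m^\vee)^G$, using the compatibility of restriction with the coefficient sequence, which is exactly what is wanted.

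The obstruction to lifting $\tilde\phi$ lives in $H^2(\Gc,\Z/\ell^n)$: from the long exact sequence of $0 \to \Z/\ell^n \to \Lambda_m^\vee \to \Ic_m^\vee \to 0$ over $\Gc$, the class $\tilde\phi$ lifts if and only if its image under $\partial\colon H^1(\Gc,\Ic_m^\vee) \to H^2(\Gc,\Z/\ell^n)$ vanishes. This is where the hypothesis that $\Inf\colon H^2(G) \to H^2(\Gc)$ is surjective enters, and I expect this to be the main obstacle: one must show $\partial\tilde\phi$ can be killed after modifying $\tilde\phi$ by an element of $\Inf H^1(G,\Ic_m^\vee)$ — which does not change the restriction of $\tilde\phi$ to $H$. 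The argument is a diagram chase in the commutative ladder relating the long exact sequences of the coefficient sequence over $G$ and over $\Gc$, together with the spectral-sequence exact sequences in both the $\Lambda_m^\vee$ and $\Ic_m^\vee$ rows. Concretely: $\partial\tilde\phi$ maps to $\delta(d_2\phi) = \delta(0) = 0$ under an appropriate comparison, but one must check it is in the image of $H^2(G,\Z/\ell^n) \to H^2(\Gc,\Z/\ell^n)$; then surjectivity of that inflation lets one correct by a class pulled back from $H^2(G,\Z/\ell^n)$, and exactness in the $\Lambda_m^\vee$ row over $G$ (whose $E_2^{2,0}$ surjects appropriately given the surjectivity assumption) shows this correction can be realized by adjusting $\tilde\phi$ within its $H$-restriction class, after which $\partial$ of the corrected class vanishes.

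Once $\partial\tilde\phi = 0$ is arranged, pick $\psi \in H^1(\Gc,\Lambda_m^\vee)$ lifting $\tilde\phi$, set $\gamma := \res_H(\psi) \in H^1(H,\Lambda_m^\vee)$, and observe $\gamma$ is $G$-invariant since $\psi$ is a class on $\Gc$ (so $\res_H\psi$ lands in $H^1(H,\Lambda_m^\vee)^G$ by the standard $G$-action on restriction). Chasing the square
\[
\xymatrix{
H^1(\Gc,\Lambda_m^\vee) \ar[r]^{\res} \ar[d] & H^1(H,\Lambda_m^\vee)^G \ar[d] \\
H^1(\Gc,\Ic_m^\vee) \ar[r]^{\res} & H^1(H,\Ic_m^\vee)^G
}
\]
gives that the image of $\gamma$ in $H^1(H,\Ic_m^\vee)^G$ equals $\res_H\tilde\phi = \phi$, so $\phi = \phi_\gamma$. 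This completes the reduction and hence the proof of the main theorem; the only genuinely delicate point is the compatibility bookkeeping in the previous paragraph, which I would organize by assembling all four long/short exact sequences into a single large commutative diagram (an extension of the diagram of Lemma~\ref{lem:comm-diagram} to include the $\Gc$-cohomology of $\Lambda_m^\vee$ and the connecting maps) and then running the chase mechanically.
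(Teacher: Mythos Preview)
Your overall strategy---recognize $\ker d_2$ as the image of $\res\colon H^1(\Gc,\Ic_m^\vee)\to H^1(H,\Ic_m^\vee)^G$, then try to lift the chosen $\tilde\phi$ to $H^1(\Gc,\Lambda_m^\vee)$---is natural, but the crucial correction step does not close. Concretely: after writing $\partial_\Gc\tilde\phi=\Inf(\alpha)$ with $\alpha\in H^2(G,\Z/\ell^n)$, you need $\alpha$ (for some choice of lift) to lie in $\Imm\bigl(\partial_G\colon H^1(G,\Ic_m^\vee)\to H^2(G,\Z/\ell^n)\bigr)=\ker\bigl(H^2(G,\Z/\ell^n)\to H^2(G,\Lambda_m^\vee)\bigr)$, so that modifying $\tilde\phi$ by an inflated class kills $\partial_\Gc\tilde\phi$. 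What the diagram actually gives you is only that the image $\bar\alpha\in H^2(G,\Lambda_m^\vee)$ lies in $\ker\Inf=\Imm\bigl(d_2\colon H^1(H,\Lambda_m^\vee)^G\to H^2(G,\Lambda_m^\vee)\bigr)$, say $\bar\alpha=d_2(\gamma')$. The freedom you have in $\alpha$ is precisely $\ker\Inf=d_2(H^1(H,\Z/\ell^n)^G)$, so to make $\bar\alpha$ vanish you would need $\gamma'$ to come from $H^1(H,\Z/\ell^n)^G$, i.e.\ the image of $\gamma'$ in $H^1(H,\Ic_m^\vee)^G$ to vanish. But all the chase yields is that this image $\phi'$ satisfies $d_2\phi'=0$, which is exactly the statement you are trying to prove; the argument becomes circular. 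Your parenthetical ``whose $E_2^{2,0}$ surjects appropriately given the surjectivity assumption'' does not help: the hypothesis is surjectivity for $\Z/\ell^n$-coefficients, and nothing forces surjectivity of $H^2(G,\Lambda_m^\vee)\to H^2(\Gc,\Lambda_m^\vee)$.

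The paper breaks this circularity by induction on $m$. The key structural observation you are missing is that each graded piece $(\Ic^{m-1}/\Ic^m)^\vee$ is a \emph{trivial} $G$-module, so $H^1(G,(\Ic^{m-1}/\Ic^m)^\vee)\xrightarrow{\Inf}H^1(\Gc,(\Ic^{m-1}/\Ic^m)^\vee)$ is an isomorphism and hence $d_2$ is \emph{injective} on $H^1(H,(\Ic^{m-1}/\Ic^m)^\vee)^G$. This gives the base case $m=2$ for free, and in the inductive step lets one push $\phi\in\ker d_2$ at level $m+1$ down to level $m$ via the sequence $0\to\Ic_m^\vee\to\Ic_{m+1}^\vee\to(\Ic^m/\Ic^{m+1})^\vee\to 0$; then one invokes the full theorem at level $m$ (which follows from the proposition at level $m$ together with the diagram chase in Lemma~\ref{lem:comm-diagram}) to produce the required preimage in $H^1(H,\Lambda_m^\vee)^G$, and pushes forward to $\Lambda_{m+1}^\vee$. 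Your single-shot chase lacks exactly this filtration input.
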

\begin{proof}
Let us first note that $(\Ic^{m-1}/\Ic^m)^\vee$ is a trivial $G$-module with $\ell^n$ torsion as an abelian group.
Therefore the inflation map $H^1(G,(\Ic^{m-1}/\Ic^m)^\vee) \rightarrow H^1(\Gc,(\Ic^{m-1}/\Ic^m)^\vee)$ is an isomorphism; indeed, both groups are isomorphic to $\Hom(G, (\Ic^{m-1}/\Ic^m)^\vee)$ since $G = \Gcm{2}$.
Thus
\[\ker(d_2 \colon H^1(H,(\Ic^{m-1}/\Ic^m)^\vee)^G \rightarrow H^2(G,(\Ic^{m-1}/\Ic^m)^\vee)) = 0.\]

Now we proceed to prove the Proposition by descending induction on $m$. The base case $m = 2$ is obtained from the observation above since, in particular, the map $d_2$ is injective.
Suppose inductively that the kernel of $d_2 \colon H^1(H,\Ic_m^\vee)^G \rightarrow H^2(G,\Ic_m^\vee)$ lies in the image of $H^1(H,\Lambda_m^\vee)^G$.
In particular, Theorem \ref{thm:main-thm} holds for $m$ by our inductive hypothesis.
We shall first show that any $\phi \in \ker(d_2 \colon H^1(H,\Ic_{m+1}^\vee)^G \rightarrow H^2(G,\Ic_{m+1}^\vee))$ actually lies in the image of the canonical map $H^1(H,\Ic_m^\vee)^G \rightarrow H^1(H,\Ic_{m+1}^\vee)^G$.
This follows from a diagram chase using the following diagram with exact rows and columns. Note, in particular, that the restriction map $H^1(\Gc,(\Ic^m/\Ic^{m+1})^\vee) \rightarrow H^1(H, (\Ic^m/\Ic^{m+1})^\vee)^G$ is the trivial map as noted above:
\[
\xymatrix{
 & H^1(H, \Ic_m^\vee)^G \ar[d] & \\
 & H^1(H, \Ic_{m+1}^\vee)^G \ar[r]^{d_2} \ar[d] & H^2(G,\Ic_{m+1}^\vee) \ar[d] \\
0 \ar[r] & H^1(H, (\Ic^m/\Ic^{m+1})^\vee)^G \ar[r]^{d_2} & H^2(G,(\Ic^m/\Ic^{m+1})^\vee)
}
\]
Note that exactness of the first column follows from the fact that the functor $M\to M^G$ is left exact and it is applied to the exact sequence:
\[0 \rightarrow H^1(H,\Ic_m^\vee)^G \rightarrow H^1(H,\Ic_{m+1}^\vee)^G \rightarrow H^1(H, (\Ic^m/\Ic^{m+1})^\vee)^G \]
We have shown that, if $\phi \in H^1(H, \Ic_{m+1}^\vee)^G$ lies in the kernel of $d_2$, then $\phi$ is the image of some element $\phi_0 \in H^1(H,\Ic_m^\vee)^G$.
By the functoriality of $\Psi$ we see that $\Psi(\phi_0)$ is zero.
By Theorem \ref{thm:main-thm} which inductively holds $m$, we deduce that $\phi_0$ is the image of some element $\psi \in H^1(H,\Lambda_m^\vee)^G$.
In particular, $\phi$ must be the image of $\psi$ under the canonical map
\[ H^1(H,\Lambda_m^\vee)^G \rightarrow H^1(H,\Lambda_{m+1}^\vee)^G \rightarrow H^1(H,\Ic_{m+1}^\vee)^G. \qedhere\]
\end{proof}

This completes the proof of Theorem \ref{thm:main-thm}.
Once we identify $H^1(H,\Ic_m^\vee)^G$ with $\Hom_G(\Ic_m,J)$ and $H^1(H,\Lambda_m^\vee)^G$ with $\Hom_G(\Lambda_m,J)$ as in Lemma \ref{lem:cohom-jm}, we deduce Theorem \ref{thm:main-intro}.

\begin{bibdiv}
\begin{biblist}

\bib{Arason1987}{article}{
      author={Arason, J.},
      author={Elman, R.},
      author={Jacob, B.},
       title={{Rigid elements, valuations, and realization of {W}itt rings}},
        date={1987},
        ISSN={0021-8693},
     journal={J. Algebra},
      volume={110},
      number={2},
       pages={449\ndash 467},
         url={http://dx.doi.org/10.1016/0021-8693(87)90057-3},
      review={\MR{910395 (89a:11041)}},
}

\bib{AGKM}{article}{
      author={Adem, A.},
      author={Gao, W.},
      author={Karaguezian, D.},
      author={{Min{\'a}\v{c}}, J.},
       title={{Field theory and the cohomology of some {G}alois groups}},
        date={2001},
        ISSN={0021-8693},
     journal={J. Algebra},
      volume={235},
      number={2},
       pages={608\ndash 635},
         url={http://dx.doi.org/10.1006/jabr.2000.8481},
      review={\MR{1805473 (2001m:12011)}},
}

\bib{Bogomolov1991}{incollection}{
      author={Bogomolov, F.~A.},
       title={{On two conjectures in birational algebraic geometry}},
        date={1991},
   booktitle={{Algebraic geometry and analytic geometry ({T}okyo, 1990)}},
      series={{ICM-90 Satell. Conf. Proc.}},
   publisher={Springer},
     address={Tokyo},
       pages={26\ndash 52},
      review={\MR{1260938 (94k:14013)}},
}

\bib{Bogomolov2007}{incollection}{
      author={Bogomolov, F.~A.},
      author={Tschinkel, Y.},
       title={{Commuting elements of {G}alois groups of function fields}},
        date={2002},
   booktitle={{Motives, polylogarithms and {H}odge theory, {P}art {I}
  ({I}rvine, {CA}, 1998)}},
      series={{Int. Press Lect. Ser.}},
      volume={3},
   publisher={Int. Press, Somerville, MA},
       pages={75\ndash 120},
}

\bib{Bogomolov2008a}{article}{
      author={Bogomolov, F.~A.},
      author={Tschinkel, Y.},
       title={{Reconstruction of function fields}},
        date={2008},
        ISSN={1016-443X},
     journal={Geom. Funct. Anal.},
      volume={18},
      number={2},
       pages={400\ndash 462},
         url={http://dx.doi.org/10.1007/s00039-008-0665-8},
      review={\MR{2421544 (2009g:11155)}},
}

\bib{Bogomolov2011}{article}{
      author={Bogomolov, F.~A.},
      author={Tschinkel, Y.},
       title={{Reconstruction of higher-dimensional function fields}},
        date={2011},
        ISSN={1609-3321},
     journal={Mosc. Math. J.},
      volume={11},
      number={2},
       pages={185\ndash 204, 406},
      review={\MR{2859233}},
}

\bib{Chebolu2009}{article}{
      author={Chebolu, K.~S.},
      author={Efrat, I.},
      author={Min{\'a}\v{c}, J.},
       title={{Quotients of absolute {G}alois groups which determine the entire
  {G}alois cohomology}},
        date={2012-05},
     journal={Mathematische Annalen},
      volume={352},
      number={1},
       pages={205\ndash 221},
      eprint={http://arxiv.org/abs/0905.1364},
}

\bib{Efrat2006}{article}{
      author={Efrat, I.},
       title={{Quotients of {M}ilnor {$K$}-rings, orderings, and valuations}},
        date={2006},
        ISSN={0030-8730},
     journal={Pacific J. Math.},
      volume={226},
      number={2},
       pages={259\ndash 275},
         url={http://dx.doi.org/10.2140/pjm.2006.226.259},
      review={\MR{2247864 (2007h:19004)}},
}

\bib{Efrat2006b}{book}{
      author={Efrat, I.},
       title={{Valuations, orderings, and {M}ilnor {$K$}-theory}},
      series={{Mathematical Surveys and Monographs}},
   publisher={American Mathematical Society},
     address={Providence, RI},
        date={2006},
      volume={124},
        ISBN={0-8218-4041-X},
      review={\MR{2215492 (2007g:12006)}},
}

\bib{Efrat1995}{article}{
      author={Efrat, I.},
       title={{Abelian subgroups of pro-{$2$} {G}alois groups}},
        date={1995},
        ISSN={0002-9939},
     journal={Proc. Amer. Math. Soc.},
      volume={123},
      number={4},
       pages={1031\ndash 1035},
         url={http://dx.doi.org/10.2307/2160698},
      review={\MR{1242081 (95e:12007)}},
}

\bib{Efrat1999}{article}{
      author={Efrat, I.},
       title={{Construction of valuations from {$K$}-theory}},
        date={1999},
        ISSN={1073-2780},
     journal={Math. Res. Lett.},
      volume={6},
      number={3-4},
       pages={335\ndash 343},
      review={\MR{1713134 (2001i:12011)}},
}

\bib{Engler1998}{article}{
      author={Engler, A.~J.},
      author={Koenigsmann, J.},
       title={{Abelian subgroups of pro-{$p$} {G}alois groups}},
        date={1998},
        ISSN={0002-9947},
     journal={Trans. Amer. Math. Soc.},
      volume={350},
      number={6},
       pages={2473\ndash 2485},
         url={http://dx.doi.org/10.1090/S0002-9947-98-02063-7},
      review={\MR{1451599 (98h:12004)}},
}

\bib{Efrat2011c}{article}{
      author={Efrat, I.},
      author={Min{\'a}\v{c}, J.},
       title={{{G}alois Groups and Cohomological Functors}},
        date={2011},
     journal={Preprint},
      eprint={http://arxiv.org/abs/1103.1508},
}

\bib{Efrat2011a}{article}{
      author={Efrat, I.},
      author={Min{\'a}{\v{c}}, J.},
       title={Small {G}alois groups that encode valuations},
        date={2012},
        ISSN={0065-1036},
     journal={Acta Arith.},
      volume={156},
      number={1},
       pages={7\ndash 17},
         url={http://dx.doi.org/10.4064/aa156-1-2},
      review={\MR{2997568}},
}

\bib{Engler1994}{article}{
      author={Engler, A.~J.},
      author={Nogueira, J.~B.},
       title={{Maximal abelian normal subgroups of {G}alois pro-{$2$}-groups}},
        date={1994},
        ISSN={0021-8693},
     journal={J. Algebra},
      volume={166},
      number={3},
       pages={481\ndash 505},
         url={http://dx.doi.org/10.1006/jabr.1994.1164},
      review={\MR{1280589 (95h:12004)}},
}

\bib{Jacob1989}{article}{
      author={Jacob, B.},
      author={Ware, R.},
       title={{A recursive description of the maximal pro-{$2$} {G}alois group
  via {W}itt rings}},
        date={1989},
        ISSN={0025-5874},
     journal={Math. Z.},
      volume={200},
      number={3},
       pages={379\ndash 396},
         url={http://dx.doi.org/10.1007/BF01215654},
}

\bib{Koenigsmann1995}{article}{
      author={Koenigsmann, J.},
       title={{From {$p$}-rigid elements to valuations (with a
  {G}alois-characterization of {$p$}-adic fields)}},
        date={1995},
        ISSN={0075-4102},
     journal={J. Reine Angew. Math.},
      volume={465},
       pages={165\ndash 182},
         url={http://dx.doi.org/10.1515/crll.1995.465.165},
        note={With an appendix by Florian Pop},
      review={\MR{1344135 (96m:12003)}},
}

\bib{Mah'e2004}{article}{
      author={Mah{\'e}, L.},
      author={Min{\'a}\v{c}, J.},
      author={Smith, T.~L.},
       title={{Additive structure of multiplicative subgroups of fields and
  {G}alois theory}},
        date={2004},
        ISSN={1431-0635},
     journal={Doc. Math.},
      volume={9},
       pages={301\ndash 355},
      review={\MR{2117418 (2006b:11040)}},
}

\bib{Merkurjev1982}{article}{
      author={Merkurjev, A.~S.},
      author={Suslin, A.~A.},
       title={{{$K$}-cohomology of {S}everi-{B}rauer varieties and the norm
  residue homomorphism}},
        date={1982},
        ISSN={0373-2436},
     journal={Izv. Akad. Nauk SSSR Ser. Mat.},
      volume={46},
      number={5},
       pages={1011\ndash 1046, 1135\ndash 1136},
}

\bib{Minac1990}{article}{
      author={Min{\'a}\v{c}, J.},
      author={Spira, M.},
       title={{Formally real fields, {P}ythagorean fields, {$C$}-fields and
  {$W$}-groups}},
        date={1990},
        ISSN={0025-5874},
     journal={Math. Z.},
      volume={205},
      number={4},
       pages={519\ndash 530},
         url={http://dx.doi.org/10.1007/BF02571260},
      review={\MR{1082872 (91m:11030)}},
}

\bib{Minac1996}{article}{
      author={Min{\'a}\v{c}, J.},
      author={Spira, M.},
       title={{Witt rings and {G}alois groups}},
        date={1996},
        ISSN={0003-486X},
     journal={Ann. of Math. (2)},
      volume={144},
      number={1},
       pages={35\ndash 60},
         url={http://dx.doi.org/10.2307/2118582},
      review={\MR{1405942 (97i:11038)}},
}

\bib{Neukirch1969a}{article}{
      author={Neukirch, J.},
       title={{Kennzeichnung der endlich-algebraischen {Z}ahlk{\"o}rper durch
  die {G}aloisgruppe der maximal aufl{\"o}sbaren {E}rweiterungen}},
        date={1969},
        ISSN={0075-4102},
     journal={J. Reine Angew. Math.},
      volume={238},
       pages={135\ndash 147},
      review={\MR{0258804 (41 \#3450)}},
}

\bib{Neukirch1969}{article}{
      author={Neukirch, J.},
       title={{Kennzeichnung der {$p$}-adischen und der endlichen algebraischen
  {Z}ahlk{\"o}rper}},
        date={1969},
        ISSN={0020-9910},
     journal={Invent. Math.},
      volume={6},
       pages={296\ndash 314},
      review={\MR{0244211 (39 \#5528)}},
}

\bib{Neukirch2008}{book}{
      author={Neukirch, J.},
      author={Schmidt, A.},
      author={Wingberg, K.},
       title={{Cohomology of number fields}},
     edition={Second},
      series={{Grundlehren der Mathematischen Wissenschaften}},
   publisher={Springer-Verlag},
     address={Berlin},
        date={2008},
      volume={323},
        ISBN={978-3-540-37888-4},
      review={\MR{2392026 (2008m:11223)}},
}

\bib{Pop2000}{incollection}{
      author={Pop, F.},
       title={{Alterations and birational anabelian geometry}},
        date={2000},
   booktitle={{Resolution of singularities ({O}bergurgl, 1997)}},
      series={{Progr. Math.}},
      volume={181},
   publisher={Birkh{\"a}user},
     address={Basel},
       pages={519\ndash 532},
      review={\MR{1748633 (2001g:11171)}},
}

\bib{Pop2010}{article}{
      author={Pop, F.},
       title={{Pro-{$\ell$} abelian-by-central {G}alois theory of prime
  divisors}},
        date={2010},
        ISSN={0021-2172},
     journal={Israel J. Math.},
      volume={180},
       pages={43\ndash 68},
         url={http://dx.doi.org/10.1007/s11856-010-0093-y},
      review={\MR{2735055 (2012a:12010)}},
}

\bib{Pop2011}{article}{
      author={Pop, F.},
       title={{On the birational anabelian program initiated by {B}ogomolov
  {I}}},
        date={2012},
        ISSN={0020-9910},
     journal={Invent. Math.},
      volume={187},
      number={3},
       pages={511\ndash 533},
         url={http://dx.doi.org/10.1007/s00222-011-0331-x},
      review={\MR{2891876}},
}

\bib{Pop1994}{article}{
      author={Pop, F.},
       title={{On {G}rothendieck's conjecture of birational anabelian
  geometry}},
        date={1994},
        ISSN={0003-486X},
     journal={Ann. of Math. (2)},
      volume={139},
      number={1},
       pages={145\ndash 182},
         url={http://dx.doi.org/10.2307/2946630},
      review={\MR{1259367 (94m:12007)}},
}

\bib{Topaz2012}{article}{
      author={Topaz, A.},
       title={{Commuting-Liftable subgroups of {G}alois groups II}},
        date={2012},
     journal={Preprint},
      eprint={http://arxiv.org/abs/1208.0583},
}

\bib{Uchida1976}{article}{
      author={Uchida, K.},
       title={{Isomorphisms of {G}alois groups}},
        date={1976},
        ISSN={0025-5645},
     journal={J. Math. Soc. Japan},
      volume={28},
      number={4},
       pages={617\ndash 620},
      review={\MR{0432593 (55 \#5580)}},
}

\bib{Ware1981}{article}{
      author={Ware, R.},
       title={{Valuation rings and rigid elements in fields}},
        date={1981},
        ISSN={0008-414X},
     journal={Canad. J. Math.},
      volume={33},
      number={6},
       pages={1338\ndash 1355},
         url={http://dx.doi.org/10.4153/CJM-1981-103-0},
      review={\MR{645230 (83i:10028)}},
}

\end{biblist}
\end{bibdiv}

\end{document}